\DeclareMathOperator{\ex}{ex}
\newtheorem{theorem}{Theorem}[section]
\newtheorem{lemma}[theorem]{Lemma}
\newtheorem{cor}[theorem]{Corollary}
\newtheorem{conj}[theorem]{Conjecture}
\numberwithin{equation}{section}
\begin{document}

\title{The rainbow Tur\'an number of $P_5$}

\author{Anastasia Halfpap}

\maketitle

\begin{abstract}
    An edge-colored graph $F$ is {rainbow} if each edge of $F$ has a unique color. The {rainbow Tur\'an number} $\ex^*(n,F)$ of a graph $F$ is the maximum possible number of edges in a properly edge-colored $n$-vertex graph with no rainbow copy of $F$. The study of rainbow Tur\'an numbers was introduced by Keevash, Mubayi, Sudakov, and Verstra\"ete in 2007. 
    
    In this paper we focus on $\ex^*(n,P_5)$. While several recent papers have investigated rainbow Tur\'an numbers for $\ell$-edge paths $P_{\ell}$, exact results have only been obtained for $\ell < 5$, and $P_5$ represents one of the smallest cases left open in rainbow Tur\'{a}n theory. In this paper, we prove that $\ex^*(n,P_5) \leq \frac{5n}{2}$. Combined with a lower-bound construction due to Johnston and Rombach, this result shows that $\ex^*(n,P_5) = \frac{5n}{2} $ when $n$ is divisible by $16$, thereby settling the question asymptotically for all $n$. In addition, this result strengthens the conjecture that $\ex^*(n,P_{\ell}) = \frac{\ell}{2}n + O(1)$ for all $\ell \geq 3$.
    
\end{abstract}

\section{Introduction}
An \textit{edge-coloring} $c$ of a graph $G$ with edge set $E(G)$ is a function $c: E(G) \rightarrow \mathbb{N}$; for $e \in E(G)$ we call $c(e)$ the \textit{color} of $e$. We say that an edge-colored graph is \textit{properly edge-colored} if no two incident edges receive the same color, and is \textit{rainbow} if no two edges receive the same color. Often, we build (or infer) a proper edge-coloring of $G$ in steps, by beginning with a proper edge-coloring of a subgraph $H$ of $G$ and then repeatedly selecting (or deducing) the colors of edges in $E(G) \setminus E(H)$. We say that our color selections \textit{obey coloring rules} (or are \textit{legal}) if, for each edge $e \in E(G) \setminus E(H)$, the selected color $c(e)$ is not equal to $c(f)$ for any edge $f$ which is incident to $e$ and already colored. Thus, if we begin with a properly edge-colored subgraph $H$ of $G$ and then select colors for each edge in $E(G) \setminus E(H)$ in a manner which obeys coloring rules, we will return a proper edge-coloring of $G$.

Given graphs $G$ and $F$, an \textit{F-copy} in $G$ is a (not necessarily induced) subgraph of $G$ which is isomorphic to $F$; if $G$ is edge-colored, then a \textit{rainbow F-copy} in $G$ is an $F$-copy in $G$ which is rainbow under the given coloring of $G$. An edge-colored graph is {\it rainbow-$F$-free} if it contains no rainbow $F$-copy.

The \textit{rainbow Tur$\acute{a}$n number} of a fixed graph $F$ is the maximum possible number of edges in a properly edge-colored $n$-vertex rainbow-$F$-free graph $G$. We denote this maximum by $\ex^*(n,F)$, and we say that an $n$ vertex graph $G$ \textit{achieves} $\ex^*(n,F)$ if $G$ has $\ex^*(n,F)$ edges and there exists a proper edge-coloring of $G$ under which $G$ is rainbow-$F$-free. The study of rainbow Tur\'an numbers was introduced by  Keevash, Mubayi, Sudakov, and Verstra\"ete  \cite{kmsv}.

Observe that $\ex(n,F) \leq \ex^*(n,F)$, since any properly edge-colored $F$-free graph clearly contains no rainbow $F$-copy. In fact, it was proved in \cite{kmsv} that for any $F$,
\[
\ex(n,F) \leq \ex^*(n,F) \leq \ex(n,F) + o(n^2).
\]
However, for bipartite $F$, $\ex(n,F)$ and $\ex^*(n,F)$ are not asymptotic in general.
For example, in \cite{kmsv} it was shown that asymptotically $\ex^*(n,C_6)$ is a constant factor larger than $\ex(n,C_6)$. Thus, as in classical Tur\'an theory, the difficult question is to determine rainbow Tur\'an numbers for bipartite graphs. Various such problems have received attention; see, for example \cite{DLS13, OJ, jps, jr}. Here, we focus on rainbow Tur\'an numbers of paths.

We denote by $P_\ell$ the path on $\ell$ edges, i.e., $\ell+1$ vertices. The behavior of $\ex^*(n,P_\ell)$ previously has been determined asymptotically only for $\ell \leq 4$. For $\ell = 1$ and $\ell = 2$, the result $\ex^*(n, P_\ell) = \ex(n,P_\ell)$ is trivial, since any properly colored $P_1$ or $P_2$-copy is rainbow. For $\ell = 3$ and $\ell = 4$, Johnston, Palmer and Sarkar \cite{jps} determined $\ex^*(n,P_{\ell})$ asymptotically for all $n$, with exact values given certain divisibility criteria.

\begin{theorem}[Johnston-Palmer-Sarkar \cite{jps}]\label{path-edge-bounds}
If $n$ is divisible by $4$, then 
\[\ex^*(n,P_3) = \frac{3}{2}n.
\]
If $n$ is divisible by $8$, then
\[
\ex^*(n,P_4) = 2n.
\]
\end{theorem}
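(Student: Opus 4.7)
The plan is to handle each equality by separately establishing a lower bound from an explicit construction on vertex-disjoint copies of a small gadget, and a matching upper bound from a structural degree analysis. Both constructions will achieve the claimed density by tiling $n$ vertices with copies of a small properly edge-colored rainbow-$P_{\ell}$-free graph; both upper bounds will reduce the problem to deriving local degree constraints from the no-rainbow condition.

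For $P_3$, take $n/4$ vertex-disjoint copies of $K_4$, each equipped with a $1$-factorization (a proper $3$-edge-coloring) using fresh color sets across copies, giving $6 \cdot n/4 = 3n/2$ edges. Each color class in $K_4$ is a perfect matching, so in any $P_3$ contained in a single $K_4$ the first and last edges are non-adjacent, lie in a common matching, and share a color. For the matching upper bound, let $G$ be an $n$-vertex properly edge-colored graph with no rainbow $P_3$. Consider any edge $uv$ with $d(v) \geq 2$ and fix some $x \in N(v) \setminus \{u\}$. For each $w \in N(u) \setminus \{v, x\}$, the tuple $(w,u,v,x)$ spans a genuine $P_3$, so avoiding a rainbow copy forces $c(wu) = c(vx)$. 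Proper coloring at $u$ makes the colors $c(wu)$ all distinct, so at most one such $w$ can satisfy this equation, giving $|N(u) \setminus \{v,x\}| \leq 1$ and hence $d(u) \leq 3$. Consequently, if some vertex $u$ has $d(u) \geq 4$ then every neighbor of $u$ must be a leaf, so $u$'s component is a star of density below $1$; otherwise every component has maximum degree at most $3$ and density at most $3/2$. Either way $|E(G)| \leq 3n/2$.

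For $P_4$, take $n/8$ vertex-disjoint copies of the Cayley graph $\Gamma = \mathrm{Cay}(\mathbb{Z}_2^3, S)$, where $S = \{s_1, s_2, s_3, s_4\}$ is any set of four distinct nonzero elements of $\mathbb{Z}_2^3$ with $s_1+s_2+s_3+s_4 = 0$ (e.g., $S = \{100, 010, 001, 111\}$). Each $\Gamma$ is $4$-regular on $8$ vertices, contributing $16$ edges and totaling $2n$ across the $n/8$ copies. Color edge $\{u, u+s\}$ by $s$; this is proper since the four neighbors of $u$ correspond to the four distinct elements of $S$. Any rainbow $P_4$ would use each color of $S$ exactly once, so the consecutive differences telescope to $s_1+s_2+s_3+s_4 = 0$, forcing the endpoints to coincide, a contradiction.

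The matching upper bound $\ex^*(n,P_4) \leq 2n$ follows the same template one level deeper: for each edge $uv$ one examines the possible rainbow $P_4$s that use $uv$, namely those of shape $(w,u,v,y,z)$ and of shape $(x,w,u,v,y)$. The no-rainbow assumption together with proper coloring at $u$ and $v$ then turns into forced color equalities between edges at distance two from $uv$, which cap the local degrees and, combined with handling a few sparse exceptional components analogous to the star components above, yield $|E(G)| \leq 2n$. The main obstacle is precisely this $P_4$ upper bound: because a $P_4$ spans five vertices, the color constraints must be propagated two steps away from each edge, producing substantially more casework on which end of $uv$ is the path's midpoint, on coincidences among the extending vertices, and on whether forced equalities clash with the proper-coloring requirement.
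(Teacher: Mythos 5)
This statement is a cited result of Johnston--Palmer--Sarkar, and the paper reproduces no proof of it, so your attempt can only be judged on its own terms. Your two lower-bound constructions are correct and are in fact exactly the paper's Construction~1: the properly $3$-edge-colored $K_4$ is $D_4^*$ (the $4$-cycle $Q_2$ plus its two diagonals), and $\mathrm{Cay}(\mathbb{Z}_2^3,\{100,010,001,111\})$ with the difference-coloring is $D_8^*$ (the cube $Q_3$ plus antipodal edges); the telescoping argument $s_1+s_2+s_3+s_4=0$ is a clean way to see rainbow-$P_4$-freeness. Your $P_3$ upper bound is also complete and correct: for an edge $uv$ with $d(v)\ge 2$ and a fixed $x\in N(v)\setminus\{u\}$, each $w\in N(u)\setminus\{v,x\}$ forces $c(wu)=c(vx)$, and properness at $u$ pins down at most one such $w$, giving $d(u)\le 3$; the star dichotomy then bounds every component's density by $3/2$.

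The genuine gap is the upper bound $\ex^*(n,P_4)\le 2n$, which you describe but do not prove. Saying that the no-rainbow condition ``turns into forced color equalities \dots which cap the local degrees'' and that the remaining difficulty is ``substantially more casework'' is a plan, not an argument: you have not identified what degree cap is actually obtained, nor shown that the exceptional sparse components have density at most $2$, nor verified that the forced equalities are consistent when a single edge $uv$ sits in paths of both shapes $(w,u,v,y,z)$ and $(x,w,u,v,y)$ simultaneously. Moreover, the template does not transfer as directly as you suggest: for $P_3$ a non-rainbow path forces the single equality $c(wu)=c(vx)$, whereas a $P_4$-copy $w\,u\,v\,y\,z$ can fail to be rainbow via any one of three distinct coincidences ($c(wu)=c(vy)$, $c(wu)=c(yz)$, or $c(uv)=c(yz)$), so no single neighbor is pinned down and a pointwise bound $d(u)\le 4$ is not obviously forced. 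This is consistent with the paper's own remark that the known proof of the $P_4$ upper bound runs to about two pages; the published arguments (and the $P_5$ argument in this paper) rely on a pairing/averaging scheme that matches high-degree vertices with low-degree neighbors rather than a uniform local degree cap. Until that half is carried out, the second equality in the statement is not established.
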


These results leave $P_5$ as one of the smallest graphs whose rainbow Tur\'an number has not been determined (the other notable example being $C_4$). Previously, the best known bounds on $\mathrm{ex}^*(n,P_5)$ were due to Johnston and Rombach \cite{jr} and Halfpap and Palmer \cite{HP}, respectively

\begin{theorem}[Johnston-Rombach \cite{jr}; Halfpap-Palmer \cite{HP}]\label{oldP5}

$$\frac{5}{2}n + O(1) \leq \mathrm{ex}^*(n,P_5) \leq 4n.$$

\end{theorem}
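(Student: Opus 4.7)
The theorem combines two independent results, which I would prove separately.

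For the \emph{lower bound} $\ex^*(n,P_5) \geq \tfrac{5n}{2} + O(1)$, the plan is to exhibit a small properly edge-colored gadget graph $H$ containing no rainbow $P_5$ with $|E(H)|/|V(H)| = 5/2$, then take disjoint copies. A natural candidate is $K_6$ equipped with its round-robin $1$-factorization: vertex set $\mathbb{Z}_5 \cup \{\infty\}$, edge $\{a,b\} \subset \mathbb{Z}_5$ colored $(a+b)/2 \bmod 5$, and edge $\{\infty, i\}$ colored $i$. This is a proper $5$-edge-coloring of the $15$ edges of $K_6$. A rainbow $P_5$ here must be a rainbow Hamilton path using each of the five colors; I would verify its nonexistence by an algebraic sum-invariant---computing the sum of path-edge-colors modulo $5$ and showing it forces an equation $v_i \equiv v_j$ between distinct path-vertices when $\infty$ occupies an endpoint or near-endpoint position---and by a short finite case analysis for the two central positions of $\infty$. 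Taking $\lfloor n/6 \rfloor$ disjoint colored copies of $K_6$, padded with at most five isolated vertices, produces the required $\tfrac{5n}{2} - O(1)$ edges. The Johnston--Rombach construction presumably uses a more refined $16$-vertex gadget to attain equality for $16 \mid n$, but the $K_6$-based construction suffices for the asymptotic claim.

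For the \emph{upper bound} $\ex^*(n,P_5) \leq 4n$, the plan is a path-extension argument. Let $G$ be properly edge-colored and rainbow-$P_5$-free, and let $P = v_0 v_1 \cdots v_k$ be a maximum rainbow path in $G$; rainbow-$P_5$-freeness forces $k \leq 4$. For any neighbor $w$ of the endpoint $v_k$ outside $V(P)$, maximality of $P$ forces $c(v_k w)$ to coincide with an edge color of $P$, and proper coloring at $v_k$ excludes $c(v_{k-1}v_k)$, leaving at most $k-1$ admissible colors and hence (again by properness) at most $k-1$ external neighbors of $v_k$. Combined with at most $k$ neighbors on $P$, we obtain $d(v_k) \leq 2k - 1 \leq 7$. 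Since every subgraph of $G$ inherits both the proper edge-coloring and rainbow-$P_5$-freeness, this bound holds in every subgraph, so $G$ is $7$-degenerate and $|E(G)| \leq 7n$.

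The main technical obstacle is tightening this from $7n$ to the claimed $4n$. The naive endpoint-degree count is lossy because it ignores the structural consequences of chords of the longest rainbow path---edges $v_k v_i$ with $i \leq k-2$---which both further constrain the admissible colors for non-path neighbors of $v_k$ and spawn new maximal rainbow paths whose endpoints are themselves degree-restricted. Symmetric constraints apply at $v_0$, and coordinating the bounds at both endpoints via a careful discharging scheme (possibly identifying short rainbow cycles inside chord-rich configurations as auxiliary forbidden substructures) should close the gap. I would expect this refinement---not the basic path-extension step---to be the technical heart of the Halfpap--Palmer proof.
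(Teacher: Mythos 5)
Note first that Theorem~\ref{oldP5} is quoted from the literature and is not proved in this paper: the lower bound is due to \cite{jr}, the upper bound to \cite{HP}, and the whole point of the present paper is to replace $4n$ by $\tfrac{5n}{2}$. So your proposal is measured against the cited proofs rather than anything in this document.

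Your lower bound is correct, and it uses a genuinely different (and smaller) gadget than the folded-cube graph $D^*_{16}$ of \cite{jr}. The key claim checks out: a $P_5$-copy in $K_6$ is a Hamilton path on all six vertices, so a rainbow one must use all five colours; summing the edge colours modulo $5$ gives $3(v_1-v_5)$ when $\infty$ is an endpoint and $3(v_2-v_5)$ when $\infty$ is adjacent to an endpoint, forcing two distinct path vertices to coincide, while for $\infty$ in a central position one may normalise $v_1=0$, $v_3=1$ by the affine action on $\mathbb{Z}_5$ and verify that none of the six remaining vertex assignments is rainbow. Disjoint copies then give $\tfrac{5}{2}n - O(1)$ edges, which is all the stated bound requires. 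Both constructions achieve edge density $\tfrac{5}{2}$, so nothing is lost asymptotically.

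The upper bound, however, contains a genuine gap. Your path-extension argument is sound but only delivers $7n$: the endpoint $v_k$ of a maximum rainbow path with $k\le 4$ has at most $k$ neighbours on the path and at most $k-1$ off it, every subgraph inherits the hypotheses, so $G$ is $7$-degenerate and $e(G)\le 7n$. Everything between $7n$ and $4n$ is deferred to ``a careful discharging scheme'' that is never specified, and that refinement is not a routine tightening --- it is the entire content of the proof in \cite{HP}, which runs through a substantial case analysis, and of the present paper, which reaches $\tfrac{5n}{2}$ only by partitioning $V$ according to membership in rainbow $C_5$-copies and building disjoint local pairings around each high-degree vertex (cf.\ Lemmas~\ref{neighbor-lemma}, \ref{length-4}, and \ref{P3 lemma}). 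As written, your argument proves $\ex^*(n,P_5)\le 7n$, not $\le 4n$.
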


The lower bound in Theorem~\ref{oldP5} is a special case of a lower bound due to Johnston and Rombach \cite{jr}, which is the best known in general. The upper bound in Theorem~\ref{oldP5} is obtained by case analysis which does not generalize to longer paths; the best known general upper bound on $\mathrm{ex}^*(n,P_{\ell})$ is due to Ergemlidze, Gy\H{o}ri and Methuku \cite{EGyM19}.

\begin{theorem}[Johnston-Rombach \cite{jr}; Ergemlidze-Gy\H{o}ri-Methuku \cite{EGyM19}]\label{edge-bound}
For $\ell \geq 3$,
\[
\frac{\ell}{2}n + O(1)\leq \ex^*(n,P_\ell)\leq \left(\frac{9\ell+5}{7}\right)n.
\]
\end{theorem}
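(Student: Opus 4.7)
The statement is a combination of two independent results: a lower-bound construction due to Johnston--Rombach and a general upper bound due to Ergemlidze--Győri--Methuku. I would prove the two halves by quite different techniques, addressing them in sequence.

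For the lower bound $\ex^*(n,P_\ell)\ge \tfrac{\ell}{2}n+O(1)$, the approach is a vertex-disjoint gadget construction. I would exhibit a small properly edge-colored graph $H$ on $h$ vertices that is $\ell$-regular (so $|E(H)|=\ell h/2$) and contains no rainbow $P_\ell$. Taking $\lfloor n/h\rfloor$ vertex-disjoint copies of $H$ and padding with isolated vertices yields an $n$-vertex properly colored graph with $\tfrac{\ell}{2}n+O(1)$ edges and no rainbow $P_\ell$, since every $P_\ell$-copy is connected and hence lies inside a single gadget. The combinatorial content is designing $H$ and an appropriate proper edge-coloring: one wants to distribute colors so that every $\ell$-edge subpath is forced to repeat a color. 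Natural candidates for $H$ are highly symmetric graphs such as complete bipartite graphs or Cayley-type graphs, where one can analyse the color distribution along paths algebraically and, if necessary, permute color classes on selected edges to destroy any would-be rainbow $P_\ell$.

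For the upper bound $\ex^*(n,P_\ell)\le \tfrac{9\ell+5}{7}n$, the approach is an extremal-path argument. Let $G$ be properly edge-colored, $n$-vertex, and rainbow-$P_\ell$-free, and suppose for contradiction that $|E(G)|>\tfrac{9\ell+5}{7}n$. By iteratively deleting vertices of degree below a threshold $\delta$ calibrated to $(9\ell+5)/7$, I pass to a nonempty subgraph $G'$ of minimum degree $\ge \delta$ without dropping below the edge bound. Fix a longest rainbow path $P=v_0v_1\cdots v_k$ in $G'$ and note $k<\ell$ by hypothesis. Maximality means every edge at $v_0$ (and symmetrically at $v_k$) whose color is not already on $P$ must land inside $V(P)$; combined with the minimum-degree condition, this forces heavy incidence between the endpoints of $P$ and $V(P)$ itself. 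The proof is then completed by a charging/discharging argument: each edge of $G'$ is assigned to a bounded-size rainbow substructure anchored on $P$, and the $P_\ell$-free assumption bounds the number of such substructures per vertex, yielding the desired edge bound.

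The main obstacle lies in optimizing constants in each half. On the lower-bound side, the gadget must achieve edge density exactly $\ell/2$, and direct attempts (for instance, $K_{\ell+1}$ with a $1$-factorization, or $K_{\ell,\ell}$ with the canonical coloring) fail because the canonical colorings admit many rainbow $P_\ell$-copies; the heart of Johnston--Rombach's argument is the explicit construction and verification that sidesteps this. On the upper-bound side, a crude longest-rainbow-path analysis yields only a bound of order $\ell n$ with a much larger leading constant; the refinement to $\tfrac{9\ell+5}{7}$ is the technical content of Ergemlidze--Győri--Methuku and hinges on precisely the discharging step sketched above. The denominator $7$ presumably reflects the specific size of the rainbow substructures they isolate, and calibrating the weights so that both sides of the charging inequality meet at $(9\ell+5)/7$ is the delicate step I would expect to be the hardest to reproduce.
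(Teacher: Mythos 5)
This statement is quoted from the literature, and the paper does not reprove it; but it does record the lower-bound construction explicitly (Construction~1), so your proposal can be measured against that. Your high-level framing is right on both halves: the lower bound does come from vertex-disjoint copies of an $\ell$-regular, properly colored, rainbow-$P_\ell$-free gadget, and the upper bound is a density argument of the longest-rainbow-path/discharging type. The problem is that in both halves the essential kernel is left as an acknowledged black box, so as a proof the proposal has genuine gaps.

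For the lower bound, you never exhibit the gadget; you name candidates ($K_{\ell+1}$ with a $1$-factorization, $K_{\ell,\ell}$) and then correctly note that they fail. The actual gadget is the folded cube $D_{2^{\ell-1}}^*$: take $Q_{\ell-1}$, color each edge by the coordinate in which its endpoints differ, and add all antipodal edges $x\overline{x}$ in a new color $\ell$. This graph is $\ell$-regular on $2^{\ell-1}$ vertices, and the verification that it has no rainbow $P_\ell$ is short but essential: only $\ell$ colors exist, so a rainbow $P_\ell$ would have to use each coordinate direction exactly once and exactly one diagonal edge; the coordinate edges together flip every bit once and the diagonal flips every bit once more, so the walk returns to its starting vertex and cannot be a path. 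Without this construction and this parity argument, the lower bound is not proved. For the upper bound, your sketch (pass to minimum degree $\delta$, take a longest rainbow path, exploit maximality at the endpoints, discharge) is a reasonable description of the genre, but nothing in it produces the constant $\tfrac{9\ell+5}{7}$; you say as much yourself. A crude version of your argument gives a bound of the form $c\ell n$ for a larger $c$, which would suffice for the weaker claim $\ex^*(n,P_\ell)=O(\ell n)$ but not for the stated inequality. So the proposal should be read as a correct roadmap with the two hard steps (the folded-cube verification and the Ergemlidze--Gy\H{o}ri--Methuku discharging) still to be supplied.
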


The lower bound on $\ex^*(n, P_{\ell})$ from \cite{jr} is achieved by taking disjoint copies of the following construction. 

\textbf{Construction 1.} Let $Q_{\ell-1}$ be the $\ell-1$ dimensional cube, i.e., the graph whose vertex set is the set of $01$-strings of length $\ell-1$ and two vertices are joined by an edge if and only if their Hamming distance is exactly $1$.

We color the edges of $Q_{\ell-1}$ by the position in which their corresponding strings differ. For each vertex $x$ of 
$Q_{\ell-1}$, let $\overline{x}$ be the {\it antipode} of $x$. That is, $\overline{x}$ is the unique vertex of Hamming distance $\ell-1$ from $x$. Now add all edges $x\overline{x}$ to this graph and color these edges with a new color $\ell$. Call these edges {\it diagonal} edges and denote the resulting edge-colored graph $D_{2^{\ell-1}}^*$. The underlying (uncolored) graph of $D_{2^{\ell-1}}^*$ is often referred to as a {\it folded cube graph}. 

\vspace{0.5 cm}

Note that the lower bound from Theorem \ref{edge-bound} is shown to be tight by Theorem \ref{path-edge-bounds} when $n = 3,4$. While these small cases provide limited data, they suggest the following.

\begin{conj} For all $\ell \geq 3$, $\ex^*(n,P_{\ell}) = \frac{\ell}{2}n + O(1)$. \end{conj}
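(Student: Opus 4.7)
The plan would be to mirror the structure of the known exact results for $\ell \in \{3,4,5\}$ and attempt an induction on $\ell$. The lower bound $\ex^*(n,P_\ell) \geq \frac{\ell}{2}n + O(1)$ is already furnished by disjoint copies of the folded cube $D_{2^{\ell-1}}^*$ from Construction~1, so the content of the conjecture lies entirely in the matching upper bound $\ex^*(n,P_\ell) \leq \frac{\ell}{2}n + O(1)$. The base cases $\ell \in \{3,4,5\}$ are already handled by Theorem~\ref{path-edge-bounds} and the main result of this paper, which gives the inductive scheme something substantial to build on.

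For the upper bound I would set up a deletion-and-induction framework on $n$ for each fixed $\ell$. Let $G$ be a properly edge-colored, rainbow-$P_\ell$-free graph on $n$ vertices with more than $\frac{\ell}{2}n + C_\ell$ edges, for a constant $C_\ell$ chosen in advance. If $G$ contains a vertex $v$ with $\deg(v) \leq \lceil \ell/2 \rceil$, then $G - v$ still has more than $\frac{\ell}{2}(n-1) + C_\ell$ edges and is obviously rainbow-$P_\ell$-free under the restricted coloring, so induction on $n$ finishes that case. The problem therefore reduces to showing that no properly edge-colored graph with $\delta(G) \geq \lceil \ell/2 \rceil + 1$ can be rainbow-$P_\ell$-free --- or more precisely, that such a graph can only evade rainbow $P_\ell$ at the cost of a very specific, folded-cube-like structure whose edge count is bounded by $\frac{\ell}{2}n + O(1)$.

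The heart of the argument, and the main obstacle, is the minimum-degree case. Here I would start from a maximum rainbow path $Q = v_0 v_1 \cdots v_k$ in $G$; by assumption $k \leq \ell - 1$. Maximality forces every edge incident to $v_0$ or $v_k$ either to have both endpoints in $V(Q)$ or to use one of the $k$ colors already appearing on $Q$. The minimum-degree hypothesis then provides enough such ``chord'' and ``repeated-color'' edges to attempt local exchanges: replace one edge of $Q$ by another of the same color, thereby rotating the path and exposing new endpoints. A carefully organized exchange scheme should either produce a rainbow $P_\ell$ directly or force $G$ to look locally like $D_{2^{\ell-1}}^*$ near every high-degree vertex. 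This is morally the same engine that powers the $P_5$ upper bound proved in this paper, but the bookkeeping grows rapidly with $\ell$, since at each endpoint there are now $\binom{k}{2}$-many ways to swap a pair of used colors and correspondingly many cases to rule out.

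I expect the technical ceiling of this strategy to be exactly what has left the conjecture open beyond $\ell = 5$: in the folded cube construction essentially every maximum rainbow path is extendable in many ways, so one cannot afford any slack in the exchange analysis, and the gap between $\frac{\ell}{2}n$ and the best general bound $\frac{9\ell+5}{7}n$ of Theorem~\ref{edge-bound} must be closed entirely through such exchanges. A complete proof will likely require a stability result showing that rainbow-$P_\ell$-free graphs of near-extremal size must globally decompose into pieces resembling $D_{2^{\ell-1}}^*$, which seems substantially harder than a purely inductive argument and is the step I would expect to be the true bottleneck.
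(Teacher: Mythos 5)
The statement you are addressing is a conjecture, not a theorem: the paper proves the case $\ell = 5$ only, and for $\ell \geq 6$ the problem is open. Your text is a research plan rather than a proof, and its essential step is exactly the part that is missing. You reduce to the claim that no properly edge-colored graph with $\delta(G) \geq \lceil \ell/2\rceil + 1$ can be rainbow-$P_\ell$-free, but that claim is false as stated: the folded cube $D_{2^{\ell-1}}^*$ of Construction~1 is $\ell$-regular and rainbow-$P_\ell$-free. Your hedged reformulation (``such a graph must be locally folded-cube-like, hence sparse'') is precisely the open structural statement that the conjecture amounts to, and the proposal gives no argument for it beyond asserting that rotation/exchange of a maximum rainbow path ``should'' work; you yourself flag this as the bottleneck. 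A sketch whose central lemma is the conjecture in disguise does not constitute a proof.

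There are also concrete slips in the reduction. Deleting a vertex of degree $\lceil \ell/2\rceil$ from a graph with more than $\tfrac{\ell}{2}n + C_\ell$ edges leaves, for odd $\ell$, only more than $\tfrac{\ell}{2}(n-1) + C_\ell - \tfrac12$ edges, so the induction hypothesis with a fixed constant $C_\ell$ is not maintained; you may only prune vertices of degree at most $\lfloor \ell/2\rfloor$, which yields minimum degree roughly $\ell/2$ --- far below the $\ell$-regularity of the extremal construction, so no contradiction from degree alone is available and one must instead bound the \emph{average} degree by $\ell$. That is in fact what the paper does for $\ell=5$, and by a different engine from the one you describe: not path rotations in the Erd\H{o}s--Gallai style, but a partition of $V$ into the vertices $V'$ lying in rainbow $C_5$-copies (handled by Lemma~\ref{cycle-lemma}) and $\overline{V'}$, followed by disjoint ``local pairings'' of each high-degree vertex of $\overline{V'}$ with low-degree neighbors, supported by the structural Lemmas~\ref{neighbor-lemma}--\ref{P3 lemma}. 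If you want to pursue the general conjecture, the honest statement of your proposal is that everything hinges on an unproven stability theorem for rainbow-$P_\ell$-free graphs, and nothing in the present sketch supplies it.
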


The goal of this paper is to prove an upper bound on $\ex^*(n,P_5)$ which asymptotically matches the lower bound from Theorem \ref{edge-bound}, thereby adding further weight to the conjecture that this lower bound is asymptotically correct in all non-trivial cases.

\begin{theorem}\label{main thm}
$\frac{5n}{2} + O(1) \leq \ex^*(n,P_5) \leq \frac{5n}{2}.$
\end{theorem}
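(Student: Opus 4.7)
The plan is to argue by contradiction. Suppose $G$ is a properly edge-colored, rainbow-$P_5$-free graph on $n$ vertices with $e(G) > \tfrac{5n}{2}$, chosen to minimize $n$. The target is to reduce to the case $\delta(G) \ge 6$ and there to produce a rainbow $P_5$ via a longest-rainbow-path analysis, driving a contradiction.

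The first step is the inductive reduction to large minimum degree. Deleting a vertex $v$ of degree at most $5$ yields $e(G - v) \ge e(G) - 5$, and the hypothesis on $G - v$ gives $e(G - v) \le \tfrac{5(n-1)}{2}$, which only yields $e(G) \le \tfrac{5n}{2} + \tfrac{5}{2}$. To recover the sharp bound I would refine the induction: since a $5$-regular graph has edge count exactly $\tfrac{5n}{2}$, any strict excess must be supplied by vertices of degree at least $6$, and one can track the deficit $5n - 2e(G)$ (equivalently, the small-degree vertex count) throughout the induction so that the reduction to $\delta(G) \ge 6$ closes cleanly. A companion lemma handling $5$-regular components directly will likely also be needed.

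With $\delta(G) \ge 6$, I would take a rainbow path $P = v_0 v_1 v_2 v_3 v_4$ of maximum length in $G$ (if a rainbow path with at least $5$ edges exists we are done); the existence of such a $P_4$ follows from a short greedy extension using $\delta \ge 6$. Non-extendability at $v_0$ forces every neighbor $w$ of $v_0$ other than $v_1$ to satisfy $w \in \{v_2, v_3, v_4\}$ or $c(v_0 w) \in \{c_2, c_3, c_4\}$, where $c_i = c(v_{i-1}v_i)$, and the symmetric statement holds at $v_4$. A crude count then gives only $\deg(v_0), \deg(v_4) \le 7$, which is compatible with $\delta(G) \ge 6$, so a single-$P_4$ analysis does not suffice. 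Instead, I would exploit local rigidity: each blocking edge at $v_0$ of color $c_i \in \{c_2, c_3, c_4\}$ produces, via a path-swap along $P$, a fresh maximum rainbow $P_4$, itself non-extendable and imposing its own rigidity. Cascading these substitutions around $v_0$ and $v_4$ and comparing the induced color constraints should force an infeasible configuration or directly yield a rainbow $P_5$ by a legal extension along a swapped path.

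The main obstacle is precisely this cascading structural analysis. The naive count at one endpoint of a maximum rainbow $P_4$ is not sharp enough under the hypothesis $\delta(G) \ge 6$, so the contradiction has to come from the global interplay of many maximum rainbow paths passing through common high-degree vertices. Tracking, for each such non-extendable $P_4$, which of the colors $c_1, c_2, c_3, c_4$ can legally reappear on each fresh edge incident to $v_0$ or $v_4$, and how these reappearances are constrained by the proper coloring together with the pattern of ``back'' edges into $V(P)$, is where the case analysis becomes delicate. Extracting the exact $\tfrac{5n}{2}$ bound (rather than $\tfrac{5n}{2} + O(1)$) from this web of local constraints, and in particular ruling out the tight configurations reminiscent of Construction~$1$ in any non-trivial strict-inequality scenario, will be the technical heart of the argument.
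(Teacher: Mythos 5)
Your proposal has a genuine gap at its very first step, and the rest of the plan depends on it. From $e(G) > \tfrac{5n}{2}$ (average degree greater than $5$) you can only extract a subgraph of minimum degree at least $3$ by pruning; you cannot reduce to $\delta(G) \ge 6$. This is not a bookkeeping issue that ``tracking the deficit'' can repair: the bound you are trying to prove is tight for (near-)$5$-regular graphs, such as disjoint copies of the folded cube $D_{16}^*$, so a graph with $e(G)$ just above $\tfrac{5n}{2}$ (say, an extremal example plus one edge) need not contain \emph{any} subgraph of minimum degree $6$ --- a subgraph with $\delta \ge 6$ would itself have average degree at least $6$, which is simply not available. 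Consequently the entire second phase, which assumes $\delta(G)\ge 6$ everywhere, never gets off the ground; at best you know that \emph{some} vertices have degree at least $6$, while most can have degree $3,4,5$, and the contradiction must be extracted locally around the few high-degree vertices rather than from a global minimum-degree hypothesis.

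The second phase is also only a sketch where the real difficulty lies. A maximum rainbow path of length $4$ with endpoints of degree $\ge 6$ does not lead to a quick contradiction (as you note, the naive count allows degree $7$), and the proposed ``cascading'' of path swaps is precisely the delicate part --- moreover it ignores the central structural feature of this problem: rainbow $C_5$-copies. The extremal construction is saturated with rainbow $C_5$'s and non-extendable rainbow $P_4$'s whose endpoints have degree $5$, and vertices lying on rainbow $C_5$'s can individually have degree exceeding $5$. The paper's route is different: it prunes only to $\delta(G)\ge 3$ and to components of average degree greater than $5$, partitions $V$ into the set $V'$ of vertices on rainbow $C_5$-copies and its complement $\overline{V'}$, invokes the known bound (Lemma~\ref{cycle-lemma}) that the average degree over $V'$ is at most $5$, and then handles $\overline{V'}$ by structural lemmas (a degree-$\ge 6$ vertex of $\overline{V'}$ has all neighbors in $\overline{V'}$ and is not the endpoint of a rainbow $P_4$-copy) feeding a local pairing/charging argument that bounds the average degree over $\overline{V'}$ by $5$. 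If you want to salvage your approach, you would need to replace the $\delta\ge 6$ reduction by exactly this kind of local averaging around individual high-degree vertices, and your longest-path analysis would need to account separately for vertices on rainbow $C_5$-copies, which is where the paper's decomposition does the essential work.
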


Johnston and Rombach \cite{jr} also considered a rainbow version of the generalized Tur\'an problems popularized by Alon and Shikhelman \cite{AS}. For fixed graphs $H$ and $F$, let $\ex^*(n,H,F)$ denote the maximum possible number of rainbow $H$-copies in an $n$-vertex properly edge-colored graph which is rainbow-$F$-free. (For a different formulation combining rainbow Tur\'an and generalized Tur\'an problems, see \cite{GMMP}.) We say that an $n$ vertex graph $G$ \textit{achieves} $\ex^*(n,H,F)$ if there exists a proper edge-coloring of $G$ under which $G$ is rainbow-$F$-free and contains $\ex^*(n,H,F)$ rainbow $H$-copies. Following the generalized rainbow framework in \cite{jr}, Halfpap and Palmer \cite{HP} asymptotically determined $\ex^*(n,C_{\ell},P_{\ell})$ for $\ell = 3,4,5$:

\begin{theorem}[Halfpap-Palmer \cite{HP}] \label{HPtheorem}

Let $\ell \in \{3,4,5\}$. Then, when $n$ is divisible by $2^{\ell - 1}$, we have 
$$\ex^*(n,C_{\ell},P_{\ell}) = \frac{(\ell - 1)!}{2}n;$$
moreover, when $n$ is divisible by $2^{\ell - 1}$, the graph consisting of $\frac{n}{2^{\ell-1}}$ disjoint copies of $D_{2^{\ell -1}}^*$ achieves $\ex^*(n,C_{\ell}, P_{\ell})$.

\end{theorem}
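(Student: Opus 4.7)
The plan is to establish the matching lower and upper bounds separately: I verify the explicit construction $D_{2^{\ell-1}}^*$ for the lower bound, and reduce the upper bound to a per-vertex counting claim.

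For the lower bound, when $2^{\ell-1}$ divides $n$, I would check that $n/2^{\ell-1}$ disjoint copies of $D_{2^{\ell-1}}^*$ form a properly edge-colored, rainbow-$P_\ell$-free graph containing $\frac{(\ell-1)!}{2}n$ rainbow $C_\ell$-copies. Proper coloring is immediate from the construction. For the other two properties, I identify vertices of $D_{2^{\ell-1}}^*$ with $\{0,1\}^{\ell-1}$ and track how each rainbow walk moves the current position via bit-flips. A rainbow walk of length $\ell$ in $D_{2^{\ell-1}}^*$ must use each color in $\{1,\ldots,\ell\}$ exactly once, and a short bit-flip calculation shows such a walk always returns to its starting vertex (applying all $\ell-1$ coordinate flips yields the all-ones vector, which the diagonal flip then cancels). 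This rules out rainbow $P_\ell$'s. A further analysis of intermediate positions shows that every ordering of the $\ell$ colors yields a \emph{simple} closed walk, so by vertex-transitivity each vertex lies in exactly $\ell!/2$ rainbow $C_\ell$'s, yielding $\frac{(\ell-1)!}{2}\cdot 2^{\ell-1}$ rainbow $C_\ell$'s per component.

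For the upper bound, the key claim is that in any properly edge-colored rainbow-$P_\ell$-free graph $G$, each vertex $v$ satisfies $r_G(v)\leq \ell!/2$, where $r_G(v)$ is the number of rainbow $C_\ell$-copies containing $v$. Summing over vertices and dividing by $\ell$ yields $\ex^*(n,C_\ell,P_\ell)\leq \frac{(\ell-1)!}{2}n$. To prove the per-vertex bound, I would first exploit rainbow-$P_\ell$-freeness to obtain a local restriction: given a rainbow $C_\ell$ through $v$ with $v$-edges $va,vb$ and interior cycle-colors $d_2,\ldots,d_{\ell-1}$, any extra edge $vu$ (with $u$ off the cycle) whose color is outside $\{d_2,\ldots,d_{\ell-1}\}$ would extend the cycle into a rainbow $P_\ell$ at $v$. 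This forces all such extra $v$-edges to take colors among the $\ell-2$ interior cycle colors, sharply constraining the local structure at $v$. Then, using that an ordered rainbow closed walk of length $\ell$ from $v$ is uniquely determined by its color sequence (by proper coloring), one counts color sequences that produce simple closed walks and bounds their number by $\ell!$, giving the desired $r_G(v)\leq \ell!/2$.

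The main obstacle is proving the per-vertex bound sharply. The local constraint above yields only a rough degree-type bound (roughly $d_G(v)\leq 2\ell-3$), which combined with the naive estimate $\binom{d_G(v)}{2}\cdot(\ell-2)!$ rainbow cycles through $v$ (two $v$-edges times a rainbow-path completion) exceeds $\ell!/2$ for $\ell\in\{4,5\}$. To obtain the sharp bound, one must incorporate more structural information --- in particular, how the interior cycle colors interact along the cycle and how the rainbow-$P_\ell$-free condition propagates to cycle-neighbors of $v$ (which themselves satisfy an analogous local restriction). For $\ell=5$ this analysis is the most intricate and likely requires a detailed case split on the configuration of edges incident to $v$ and its cycle-neighbors, together with a careful accounting of how distinct rainbow $C_5$-copies through $v$ share or differ in their ordered color sequences.
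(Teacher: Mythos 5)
This statement is quoted from Halfpap--Palmer \cite{HP}; the present paper does not prove it, but it does describe the key step of that proof (Lemma~\ref{cycle-lemma}), which is the relevant point of comparison. Your lower-bound verification is correct: in $D^*_{2^{\ell-1}}$ every rainbow walk of $\ell$ edges uses each of the $\ell$ colors once, its net displacement flips every coordinate twice and hence vanishes (so no rainbow $P_\ell$), and since no proper nonempty subset of the step vectors $e_1,\dots,e_{\ell-1},\mathbf{1}$ sums to zero, every ordering of the colors from a fixed vertex traces a genuine cycle; counting orderings up to reversal gives $\ell!/2$ rainbow $C_\ell$-copies through each vertex and the claimed total.

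The genuine gap is the upper bound. You reduce the whole theorem to the claim that in \emph{any} properly edge-colored rainbow-$P_\ell$-free graph every vertex lies in at most $\ell!/2$ rainbow $C_\ell$-copies, and then you concede that you cannot prove this for $\ell\in\{4,5\}$: your local color restriction yields only a degree bound of roughly $2\ell-3$ at such a vertex, and the resulting estimate $\binom{d(v)}{2}(\ell-2)!$ overshoots $\ell!/2$. The final paragraph of your proposal describes this obstacle rather than resolving it, so the heart of the theorem is missing. Moreover, a degree-independent per-vertex bound is a strictly stronger assertion than what the cited proof establishes, and it is not evidently true: as this paper recounts, the key step in \cite{HP} is the \emph{averaged} statement of Lemma~\ref{cycle-lemma}, that the vertices lying in rainbow $C_5$-copies have average degree at most $5$, proved by pairing each high-degree such vertex with a set of low-degree neighbors (local pairings) precisely because pointwise control at a single vertex does not seem attainable by direct case analysis. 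A vertex on a rainbow $C_\ell$-copy may a priori have large degree, and nothing in your outline caps the number of cycles through it at $\ell!/2$. A more promising route, in the spirit of \cite{HP}, is to prove a bound on the number of rainbow $C_\ell$-copies through a vertex $v$ that grows linearly in $d(v)$ (or a bound per edge), and then convert the average-degree lemma into the global count $\frac{(\ell-1)!}{2}n$; as written, your plan replaces the hard averaged argument with an unproven, and possibly false, pointwise claim.
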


The key step in the determination of $ex^*(n, P_5, C_5)$ was the proof of the following fact, which we present here as a separate lemma.

\begin{lemma}\label{cycle-lemma}
Let $G$ be an $n$-vertex, properly edge-colored graph which is rainbow-$P_5$-free. Let $V' \subseteq V(G)$ be the set of vertices of $G$ which are contained in at least one rainbow $C_5$-copy. Then
$$\frac{\underset{v \in V'}{\sum} d(v)} {|V'|} \leq 5.$$
\end{lemma}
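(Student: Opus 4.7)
The strategy is to prove the stronger per-cycle bound $\sum_{v \in V(C)} d(v) \leq 25$ for every rainbow $C_5$-copy $C$ in $G$, and then to convert this into the $V'$-average bound. Fix a rainbow $C_5$-copy $C : v_1 v_2 v_3 v_4 v_5$ with cycle-edge colors $c_i = c(v_i v_{i+1})$ (indices mod $5$). The key observation is the \emph{opposite-color constraint}: for every neighbor $w$ of $v_i$ with $w \notin V(C)$, the six vertices $w, v_i, v_{i+1}, v_{i+2}, v_{i+3}, v_{i+4}$ are pairwise distinct and therefore form a $P_5$. Since $G$ is rainbow-$P_5$-free while the cycle colors $c_1, \ldots, c_5$ are distinct by rainbowness of $C$, proper edge-coloring at $v_i$ forces $c(wv_i) \in \{c_{i+1}, c_{i+2}, c_{i+3}\}$. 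As distinct edges at $v_i$ carry distinct colors, $v_i$ has at most three neighbors outside $V(C)$.

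If $C$ is induced, then each $d(v_i) \leq 2 + 3 = 5$ and the per-cycle bound follows immediately. The main obstacle is the chord case: suppose $v_i v_j$ is a chord with color $\alpha$. Proper coloring at both endpoints forces $\alpha$ to avoid the four cycle-edge colors incident to $\{v_i, v_j\}$, so $\alpha$ is either the one remaining cycle color---in which case $\alpha$ occupies a slot in the opposite-color set at both endpoints and preserves $d(v_i), d(v_j) \leq 5$---or a color not used on $C$. In the latter scenario, $d(v_i)$ or $d(v_j)$ might reach $6$; however, additional rainbow-$P_5$-forbidding constraints arise from paths that traverse the chord, such as $w v_i v_j v_{j+1} v_{j+2} y$ for outside neighbors $w$ of $v_i$ and $y$ of $v_{j+2}$. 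Tracing these constraints yields reduced opposite-color sets (of size $2$ instead of $3$) at other cycle vertices, giving $d \leq 4$ there. A careful case analysis over the possible chord patterns (one chord, two chords through a common vertex, two disjoint chords) verifies that the per-cycle sum never exceeds $25$ in any configuration.

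To convert the per-cycle bound into the $V'$-average bound, assign each $v \in V'$ initial charge $d(v)$ and apply a discharging argument: any $v$ with $d(v) > 5$ must lie on a rainbow $C_5$ with a chord at $v$ (otherwise the earlier analysis already gives $d(v) \leq 5$), and the excess $d(v) - 5$ is distributed to deficit vertices on a chosen ``witness'' rainbow $C_5$ through $v$. The per-cycle bound guarantees that the total deficit within such a cycle dominates the excess, and by assigning each high-degree vertex a single witness cycle one prevents any deficit vertex from being over-charged. After discharging, every vertex has charge at most $5$, so $\sum_{v \in V'} d(v) \leq 5|V'|$. The main obstacle throughout is the chord-case bookkeeping and ensuring that the discharging remains globally consistent across vertices shared by multiple rainbow $C_5$s.
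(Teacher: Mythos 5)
Your opening observation is correct and nicely isolated: for a rainbow $C_5$-copy $C$ and a neighbor $w$ of $v_i$ outside $V(C)$, the path $wv_iv_{i+1}v_{i+2}v_{i+3}v_{i+4}$ forces $c(wv_i)$ into the three cycle colors not incident to $v_i$, so each cycle vertex has at most three outside neighbors, and an induced rainbow $C_5$ has degree sum at most $25$. However, the proposal has two genuine gaps. The smaller one is that the chorded case is only asserted: ``a careful case analysis over the possible chord patterns verifies that the per-cycle sum never exceeds $25$'' is precisely the work that would have to be done, and none of it is carried out (you would need to handle chords recolored with the fifth cycle color, chords with new colors, multiple chords, and the secondary constraints they impose, as in the sample computation you gesture at).

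The larger gap is the conversion step, which does not work as stated. The per-cycle bound balances excess against deficit only \emph{within a single} rainbow $C_5$; it says nothing about overlapping cycles. If a vertex $w$ of degree at most $4$ lies on the chosen witness cycles of several distinct high-degree vertices, your rule ``each high-degree vertex uses a single witness cycle'' does not prevent $w$ from receiving charge from all of them and ending with charge far above $5$. Indeed, the implication ``every rainbow $C_5$ has degree sum at most $25$, hence $V'$ has average degree at most $5$'' is false in the abstract: imagine several vertices of degree $9$ whose rainbow $C_5$-copies all pass through the same four vertices of degree $4$; every cycle sum is exactly $25$, yet the average over the union exceeds $5$. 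To exclude such configurations one must tie each high-degree vertex to its \emph{own} low-degree vertices and show these assignments never collide, which is exactly what the argument cited from \cite{HP} does: for each $v \in V'$ with $d(v)\ge 6$ it builds $L(v)\subseteq N(v)\cap V'$ so that $\{v\}\cup L(v)$ has average degree at most $5$, and it proves these local pairings are pairwise disjoint. Your discharging scheme would need an analogous disjointness (no-overcharge) argument, and supplying it is the real content of the lemma, not a bookkeeping afterthought.
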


Consideration of $\ex^*(n, C_{\ell},P_{\ell})$ was motivated by the problem of determining $\ex^*(n,P_{\ell})$, as the extremal constructions avoiding rainbow $P_3$ and $P_4$-copies contain many rainbow $C_3$ and $C_4$-copies. Intuitively, we expect a rainbow-$P_{\ell}$-free graph with high average degree to contain many short rainbow cycles, since long rainbow walks are unavoidable. So, it is reasonable to conjecture that graphs achieving $\ex^*(n,P_{\ell})$ also achieve $\ex^*(n,C_{\ell},P_{\ell})$. 

On the other hand, Lemma \ref{cycle-lemma} suggests an approach to finding $\ex^*(n,P_5)$. Denote by $d(G)$ and $\delta(G)$ the average degree and minimum degree, respectively, of a graph $G$. Note that, to prove Theorem \ref{main thm}, it would suffice to show that if $G$ is an $n$-vertex, properly edge-colored, rainbow-$P_5$-free graph, then $d(G) \leq 5$. We may therefore consider Lemma \ref{cycle-lemma} as a partial result towards our desired theorem. The strategy of our proof is thus to focus on vertices of $G$ which do not lie in any rainbow $C_5$-copy. We will need to show that the average degree across these vertices is at most $5$.

Lemma \ref{cycle-lemma} is proved through a ``vertex pairing" argument. Consider a properly edge-colored graph $G$ which is rainbow-$P_5$-free. Partition the vertex set $V$ of $G$ as $V = V' \cup \overline{V'}$, where $V'$ is the set of vertices of $G$ which lie in some rainbow $C_5$-copy under the given edge-coloring. For $v \in V$, let $N(v)$ denote the neighborhood of $v$. We define the set of \textit{high degree} vertices in $V'$ as
$$H = \{v \in V': d(v) \geq 6\}.$$
For $v \in H$, it is shown that there exists a \textit{low degree} subset $L(v) \subset N(v) \cap V'$ such that 
$$\frac{d(v) +\sum_{u \in L(v)} d(u)}{|L(u)| + 1} \leq 5.$$
We call $\{v\} \cup L(v)$ a \textit{local pairing in} $V'$. By construction, the average degree across a local pairing in $V'$ is at most $5$. Moreover, it is shown that we can find a set of pairwise disjoint local pairings whose union contains $H$. This implies that the average degree over all of $V'$ must be at most $5$. 

We shall use a similar approach to estimate the average degree of vertices in $\overline{V'}$. A \textit{local pairing in} $\overline{V}'$ is defined analogously, as a set $\{v\} \cup L(v)$, where $d(v) \geq 6$, $L(v) \subseteq N(v)$, $\{v\} \cup L(v) \subseteq \overline{V}'$, and 
$$\frac{d(v) +\sum_{u \in L(v)} d(u)}{|L(u)| + 1} \leq 5.$$
The crux of the argument is to find a set of pairwise disjoint local pairings in $\overline{V'}$ which contain all high degree vertices of $\overline{V'}$.

The paper is organized as follows. In Section 2, we prove a number of structural lemmas relating to vertices which do not lie in any rainbow $C_5$-copy. In Section 3, we employ these lemmas to prove our main result.

\section{Structural Lemmas}

Fix $n \geq 1$ and let $G_0$ be a properly edge-colored, $n$-vertex graph achieving $\ex^*(n, P_5)$. We may assume that $e(G_0) > \frac{5n}{2}$, since otherwise we will have nothing to prove. Thus, $d(G_0) > 5$. We will modify $G_0$ by repeatedly pruning vertices of degree $1$ or $2$. By a standard argument, pruning in this way yields a subgraph with minimum degree at least $3$, whose average degree is at least that of $G_0$. From this subgraph, we will moreover delete any components which have average degree at most $5$. Denote by $G$ the resulting subgraph of $G_0$. Observe that $$d(G) \geq d(G_0) > 5.$$ 

Throughout, we work with this graph $G$. We will ultimately achieve a contradiction by arguing that in fact, $d(G) \leq 5$, thus implying our main result. 

Let $V$ be the vertex set of $G$. We will form the partition $V = V' \cup \overline{V'}$, where $V'$ is the set of vertices in $G$ which appear in at least one rainbow $C_5$-copy, and $\overline{V'}$ is the set of vertices which do not appear in a rainbow $C_5$-copy. As noted above, since Lemma \ref{cycle-lemma} tells us that the average degree over $V'$ is at most $5$, it will suffice to show that the average degree over $\overline{V'}$ is at most $5$. 

We will follow a similar approach as in the proof of Lemma \ref{cycle-lemma}. Rather than prove that $\overline{V'}$ contains no vertex of degree greater than $5$ (which, even if true, seems too difficult to directly prove by case analysis), we shall estimate the average degree in $\overline{V}$. We do this by finding local pairings in $\overline{V'}$; once we have found these, we moreover argue that their existence implies that globally, $\overline{V'}$ in fact has average degree at most 5.

The first step is to show that, if $v \in \overline{V'}$ has $d(v) \geq 6$, then the neighbors of $v$ are also in $\overline{V'}$. Since we will eventually pair $v$ to its neighbors, this will ensure that we are pairing $v$ only to other vertices in $\overline{V'}$. 

\begin{lemma}\label{neighbor-lemma}
If $v \in \overline{V'}$ has $d(v) \geq 6$, then $N(v) \subset \overline{V'}$.

\end{lemma}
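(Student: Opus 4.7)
The plan is to argue by contradiction. Suppose $v \in \overline{V'}$ has $d(v) \geq 6$ and some neighbor $u \in N(v)$ belongs to $V'$. Fix a rainbow $C_5$ through $u$, labeled $u\,a\,b\,c\,d\,u$, with consecutive edge colors $c_1, c_2, c_3, c_4, c_5$. Because $v \in \overline{V'}$, we have $v \notin \{a, b, c, d\}$. The goal is to produce either a rainbow $P_5$ in $G$ (contradicting the rainbow-$P_5$-freeness of $G_0 \supseteq G$) or a rainbow $C_5$ containing $v$ (contradicting $v \in \overline{V'}$).

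The first step is to pin down $c(vu)$. The walk $v\,u\,a\,b\,c\,d$ is a $P_5$ on six distinct vertices with edge colors $c(vu), c_1, c_2, c_3, c_4$; since proper coloring at $u$ gives $c(vu) \neq c_1$, this $P_5$ is rainbow unless $c(vu) \in \{c_2, c_3, c_4\}$. Rainbow-$P_5$-freeness therefore forces $c(vu) \in \{c_2, c_3, c_4\}$.

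The second step uses the high degree to narrow this further. Since the coloring at $v$ is proper and $d(v) \geq 6$, the edges at $v$ carry $\geq 6$ distinct colors, so some neighbor $y \neq u$ of $v$ satisfies $c(vy) \notin \{c_1, \ldots, c_5\}$. I would then rule out $c(vu) \in \{c_2, c_4\}$ by exhibiting, in each sub-case, a rainbow $P_5$ built from $v$, $u$, $y$, and vertices of $C$. For example, if $c(vu) = c_4$ and $y \notin \{a, b, c\}$, the path $y\,v\,u\,a\,b\,c$ has pairwise distinct colors $c(vy), c_4, c_1, c_2, c_3$; if $y \in \{a, b, c, d\}$, I would substitute a suitable cyclic rearrangement such as $c\,b\,a\,v\,u\,d$ (for $y = a$), $v\,b\,a\,u\,d\,c$ (for $y = b$), $v\,c\,d\,u\,a\,b$ (for $y = c$), or $d\,v\,u\,a\,b\,c$ (for $y = d$). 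Symmetric substitutions eliminate $c(vu) = c_2$. Hence $c(vu) = c_3 = c(bc)$.

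The third step derives a contradiction from $c(vu) = c_3$. For $y \notin \{a, b, c, d\}$ with $c(vy) \notin \{c_1, c_2, c_3\}$, the $P_4$ $y\,v\,u\,a\,b$ is rainbow; extending it through any $z \in N(b) \setminus \{a, c, u, v, y\}$ yields a rainbow $P_5$ unless $c(bz) \in \{c(vy), c_1\}$, and the closing case $z = y$ (creating the candidate $C_5$ $y\,v\,u\,a\,b\,y$ through $v$) forces $c(by) \in \{c(vy), c_1\}$ as well. Proper coloring at $b$ thus bounds $|N(b) \setminus \{a, c, u, v\}|$, and if $v$ has a second new-color neighbor $y'$, intersecting the two restrictions on $N(b)$ gives $d(b) \leq 5$. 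A symmetric analysis at $c$ (via the $P_4$ $y\,v\,u\,d\,c$) produces the analogous bound. Finally, in the remaining cases where some new-color neighbor $y$ of $v$ lies in $\{a, b, c, d\}$, a ``swap'' $P_5$ such as $v\,a\,b\,c\,d\,u$ (for $y = a$) or $v\,d\,c\,b\,a\,u$ (for $y = d$) becomes rainbow, yielding the contradiction directly.

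The main obstacle is the third step. The case $c(vu) = c_3$ is hardest because $c_3$ appears on both $vu$ and $bc$, so the most natural rainbow $P_5$'s through the edge $vu$ all self-destruct; the contradiction has to be extracted by combining the local color restrictions at $b$ and $c$ with the fact that $d(v) \geq 6$ forces multiple distinct edge-colors out of $v$, and by carefully handling the auxiliary cases in which $v$ is adjacent to vertices of $C$ via a swap-style construction.
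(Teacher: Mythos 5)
Your steps 1 and 2 are fine and coincide with the paper's opening moves: using a neighbor $y$ of $v$ whose edge carries a color outside the cycle's palette, you correctly force $c(vu)=c_3$, the color of the cycle edge opposite $u$. The genuine gap is in step 3: as outlined, it never reaches a contradiction. Showing $d(b)\le 5$ and $d(c)\le 5$ contradicts nothing, because no hypothesis forces any vertex of the rainbow $C_5$ to have large degree, and your two declared exit points (a rainbow $P_5$, or a rainbow $C_5$ through $v$) cannot suffice on their own. Indeed, the statement is false for general properly edge-colored rainbow-$P_5$-free graphs: take the rainbow cycle $u\,a\,b\,c\,d$ with colors $c_1,\dots,c_5$, join $v$ to $u$ by an edge of color $c_3$, and attach five pendant vertices to $v$ by edges of five new colors. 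This coloring is proper, one checks there is no rainbow $P_5$ (every $6$-vertex path repeats $c_3$), and there is no cycle through $v$ at all, yet $d(v)=6$, $v\in\overline{V'}$ and $u\in V'$. All of the color restrictions you collect at $b$ and $c$ are satisfied by this configuration, so no further case analysis of that kind can close the argument; any correct proof must invoke the degree hypotheses on the pruned graph $G$ (in particular $\delta(G)\ge 3$), which your outline never uses.

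The repair, and the paper's actual route, is to aim the analysis at the new-color neighbor $y$ itself rather than at $b$ and $c$, and to contradict the minimum degree. With $c(vu)=c_3$ and $c(vy)$ a new color, one shows: $y$ cannot be adjacent to $u$, $a$, or $d$, since every color permitted by properness on such an edge completes a rainbow $P_5$ through $vy$ or $vu$; if $y$ is adjacent to $b$ (resp.\ $c$), the only color avoiding a rainbow $P_5$ is $c_5$ (resp.\ $c_1$), and then $y\,b\,a\,u\,v\,y$ (resp.\ $y\,c\,d\,u\,v\,y$) is a rainbow $C_5$ containing $v$, contradicting $v\in\overline{V'}$; and any edge from $y$ to a vertex outside $C\cup\{u,v\}$ must be colored $c_3$ (otherwise $x\,y\,v\,u\,a\,b$ or $x\,y\,v\,u\,d\,c$ is rainbow), so by properness $y$ has at most one such neighbor. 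Hence $d(y)\le 2$, contradicting $\delta(G)\ge 3$. That minimum-degree contradiction is the missing ingredient in your step 3.
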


\begin{proof}

Suppose for a contradiction that $v$ has a neighbor, $u$, which is in $V'$. So $u$ lies on a rainbow $C_5$-copy, which we shall call $C$. We may assume that the edges of $C$ are colored from $\{1,2,3,4,5\}$, as pictured in  Figure~\ref{fig1}.

\begin{figure}[h]
\centering
\begin{tikzpicture}
\filldraw (-1,0) circle(0.05 cm);
\filldraw (1,0) circle(0.05 cm);
\filldraw (-1,1) circle(0.05 cm);
\filldraw (1,1) circle(0.05 cm);
\filldraw (0,2) circle(0.05 cm);
\draw (-1,0) -- (1,0) node[pos=0.5, below] {4};
\draw (-1,0) -- (-1,1) node[pos=0.5, left] {5};
\draw (1,0) -- (1,1) node[pos=0.5, right] {3};
\draw (-1,1) -- (0,2) node[pos=0.6, left] {1};
\draw (1,1) -- (0,2) node[pos=0.6, right] {2};
\draw (0,2) node[above left] {$u$};
\filldraw (0,3.5) circle(0.05 cm);
\draw (0,2) -- (0,3.5);
\draw (0,3.5) node[left]{$v$};

\end{tikzpicture}
\caption{}\label{fig1}
\end{figure}

Observe that there is no edge with precisely one endpoint incident to $C$ that is colored with a color not in $\{ 1,2,3,4,5 \}$, as this immediately creates a rainbow $P_5$-copy. Observe also that since $d(v) \geq 6$, $v$ is incident to an edge which is not colored from $\{ 1,2,3,4,5\}$. We call the color on this edge $6$, and conclude that the other endpoint, say $w$, of this edge is not depicted in Figure \ref{fig1}. So the situation is as in Figure~\ref{fig2}. We will also add, in Figure~\ref{fig2}, labels to the remaining vertices on $C$.

\begin{figure}[h]
\centering
\begin{tikzpicture}
\filldraw (-1,0) circle(0.05 cm);
\filldraw (1,0) circle(0.05 cm);
\filldraw (-1,1) circle(0.05 cm);
\filldraw (1,1) circle(0.05 cm);
\filldraw (0,2) circle(0.05 cm);
\draw (-1,0) -- (1,0) node[pos=0.5, below] {4};
\draw (-1,0) -- (-1,1) node[pos=0.5, left] {5};
\draw (1,0) -- (1,1) node[pos=0.5, right] {3};
\draw (-1,1) -- (0,2) node[pos=0.6, left] {1};
\draw (1,1) -- (0,2) node[pos=0.6, right] {2};
\draw (1,0) node[right] {$u_2$};
\draw (-1,0) node[left] {$u_3$};
\draw (-1,1) node[left] {$u_4$};
\draw (0,2) node[above left] {$u$};
\draw (1,1) node[right] {$u_1$};
\filldraw (0,3.5) circle(0.05 cm);
\draw (0,2) -- (0,3.5);
\draw (0,3.5) node[left]{$v$};
\filldraw (0,5) circle(0.05 cm);
\draw (0,3.5) -- (0,5) node[pos=0.5, right]{6};
\draw (0,5) node[left]{$w$};

\end{tikzpicture}
\caption{}\label{fig2}
\end{figure}

Note now that $c(uv)$ must equal $4$, or else either of $u_3 u_2 u_1 u v w$ or $u_2 u_3 u_4 u v w$ is a rainbow $P_5$-copy. 

Now, we shall examine $w$. Recall that $\delta(G) \geq 3$. We will aim to arrive at a contradiction by showing that $d(w) \leq 2$. 

First, we shall observe that $w$ can be adjacent to no vertex of $C$. Indeed, if $wu$ is an edge, then $c(wu) \in \{1,2,3,4,5\}$ to avoid an immediate rainbow $P_5$-copy. Since the coloring must be proper, this means that $c(uw) \in \{ 3,5 \}$. But either choice produces a rainbow $P_5$-copy (either $v w u u_4 u_3 u_2$ or $v w u u_1 u_2 u_3$). So $w u$ is not an edge.

Next, observe that if $w u_1$ is an edge, then $c(w u_1) \neq 5$, since then $u_4 u v w u_1 u_2 $ is a rainbow $P_5$-copy. But this implies that either $v w u_1 u u_4 u_3$ or $v w u_1 u_2 u_3 u_4$ is a rainbow $P_5$-copy.  So $w u_1$ is not an edge. Analogously, $w u_4$ is not an edge.

Finally, if $w u_2$ is an edge, then it must be colored 1, since otherwise either $v w u_2 u_3 u_4 u$ or $v w u_2 u_1 u u_4$ is a rainbow $P_5$-copy. But if $c(w u_2) = 1$, then $w u_2 u_1 u v w$ is a rainbow $C_5$-copy containing $v$, a contradiction, since we assume $v \in \overline{V'}$. By an analogous argument, $w u_3$ is not an edge.

Thus, $w$ is not adjacent to any vertex on $C$. Moreover, if $x$ is a vertex not yet considered such that $w x$ is an edge, then observe that $c(wx)$ must equal $4$, or else either $x w v u u_1 u_2$ or $x w v u u_4 u_3$ is a rainbow $P_5$-copy. Thus, to avoid a rainbow $P_5$-copy, we must have $d(w) \leq 2$, contradicting the minimum degree condition on $G$. 

We conclude that $v$ is not adjacent to any vertex in $V'$.
\end{proof}

We will also make use of the following lemma, which further restricts the subgraphs in which high-degree vertices of $\overline{V'}$ may appear. In a $P_{\ell}$-copy $P$, the \textit{endpoints} of $P$ are the two vertices whose degree in $P$ is 1. 

\begin{lemma}\label{length-4}
Suppose $v \in \overline{V'}$ has $d(v) \geq 6$. Then $v$ is not the endpoint of a rainbow $P_4$-copy.
\end{lemma}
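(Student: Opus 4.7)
Suppose, for contradiction, that $v \in \overline{V'}$ has $d(v) \geq 6$ and is the endpoint of some rainbow $P_4$-copy, which we label $v u_1 u_2 u_3 u_4$ with edge colors $1, 2, 3, 4$ in order. The aim is to derive a contradiction by producing either a rainbow $P_5$-copy in $G$ or a rainbow $C_5$-copy containing $v$, either of which is forbidden.

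The first step is to constrain $N(v)$. For any neighbor $w$ of $v$ with $w \notin \{u_1, u_2, u_3, u_4\}$, the path $w v u_1 u_2 u_3 u_4$ is a $P_5$-copy on six distinct vertices; to avoid it being rainbow we need $c(vw) \in \{2, 3, 4\}$. So $v$ has at most three such ``outside'' neighbors. Separately, if $v u_4 \in E(G)$, then $v u_1 u_2 u_3 u_4 v$ is a $C_5$-copy containing $v$, and $C_5$-avoidance together with proper coloring force $c(v u_4) \in \{2, 3\}$. Combining these two observations with $d(v) \geq 6$ shows that at least two edges at $v$ must carry colors outside $\{1, 2, 3, 4\}$, and these new-colored edges can only terminate in $\{u_2, u_3\}$. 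So both $v u_2$ and $v u_3$ are edges, with colors we may relabel as $5$ and $6$, respectively.

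The second step is to exploit the rich rainbow substructure now present at $v$. Once $c(v u_2) = 5$ and $c(v u_3) = 6$, the paths $u_4 u_3 v u_1 u_2$, $u_1 u_2 v u_3 u_4$, and $u_4 u_3 u_2 v u_1$ are all rainbow $P_4$-copies (with color sequences $4, 6, 1, 2$, $2, 5, 6, 4$, and $4, 3, 5, 1$, respectively). Requiring that no extension of any of these paths is a rainbow $P_5$-copy yields narrow restrictions on the colors of edges leaving $u_1$, $u_2$, and $u_4$; for instance, any neighbor $z$ of $u_1$ outside $\{v, u_2, u_3, u_4\}$ must satisfy $c(u_1 z) \in \{4, 5\}$. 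In parallel, the condition $v \in \overline{V'}$ forbids rainbow $C_5$-copies through $v$; thus, for example, if $u_1 u_4 \in E(G)$, then the cycles $v u_1 u_4 u_3 u_2 v$ and $v u_2 u_1 u_4 u_3 v$ together pin down $c(u_1 u_4) = 5$, and analogous arguments constrain any potential edges $u_2 u_4$ or $u_1 u_3$, or edges between outside neighbors of $v$ and the $u_i$.

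The main obstacle is the concluding case analysis, distinguished by whether $v u_4$ is an edge and, if so, by whether $c(v u_4)$ equals $2$ or $3$. In each case, I expect the combined color restrictions from the previous step to be so tight that one of $u_1$, $u_4$, or some outside neighbor $w$ of $v$ is forced into a position where every additional neighbor would create either a rainbow $P_5$-copy or a rainbow $C_5$-copy through $v$. This will leave such a vertex with strictly fewer than three permissible neighbors, contradicting the minimum degree condition $\delta(G) \geq 3$ and completing the proof.
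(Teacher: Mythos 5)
Your opening is correct and in fact reproduces the paper's first moves: forcing the two edges of new color at $v$ to land on $u_2$ and $u_3$ is exactly how the published proof begins, and your sample deductions in the second step check out (for an outside neighbor $z$ of $u_1$ one does get $c(u_1z)\in\{4,5\}$, and $c(u_1u_4)=5$ is forced if that edge exists). The problem is that the proof stops where the real work starts. Your final paragraph is an expectation, not an argument: the color restrictions you have collected do not by themselves push any vertex below degree $3$. For instance, $u_4$ is perfectly compatible with them at degree $3$, and $u_1$ may still legally carry edges colored $4$ or $5$ to outside vertices in addition to its path edges, so at this point nothing contradicts $\delta(G)\geq 3$.

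Closing the argument requires substantially more forced structure, and that is the bulk of the paper's proof. Since $d(v)\geq 6$ and colors $1,5,6$ are already used at $v$, all of $2,3,4$ must appear at $v$; the edge colored $4$ cannot end on the path, giving an outside neighbor $u$ with $c(vu)=4$, and after one shows $N(u_4)\subseteq\{u_2,u_3,v\}$ (so that the minimum-degree condition forces $u_4$ adjacent to all three, with $c(u_4v)\in\{2,3\}$ and $c(u_4u_2)\in\{1,6\}$ -- note $u_4$ ends at degree exactly $3$, so no contradiction arises there), a sixth neighbor $s$ of $v$ with $c(vs)\in\{2,3\}$ must also exist. One then rules out the edges $u_1u_3$ and $u_1u$ and splits into two cases according to whether $u_1$ is adjacent to $s$ or to a fresh vertex $t$; only after chasing the colors in each case does a vertex of degree at most $2$ emerge, namely $u$ in the first case and $t$ (or $u_1$ itself) in the second. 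In particular, the contradiction can land on a vertex at distance two from $v$, not on $u_1$, $u_4$, or a neighbor of $v$ as you predict, which is a sign that your step-two restrictions alone cannot deliver the conclusion. None of this case analysis appears in your write-up, so the proof as proposed has a genuine gap at its crux.
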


\begin{proof}

Suppose that $v \in \overline{V'}$ has $d(v) \geq 6$ and is the endpoint of a rainbow $P_4$-copy, say $P$. Our ultimate strategy is to obtain a contradiction by arguing that, to avoid a rainbow $P_5$-copy, $G$ must contain a vertex of degree strictly less than $3$. In order to identify this low-degree vertex, we will first need to perform some case analysis to understand the structure of $G$ near $v$. We begin by drawing $P$ in Figure~\ref{fig3}, also labeling the other vertices and indicating edge colors.

\begin{figure}[h]
\centering
\begin{tikzpicture}
\filldraw (0,0) circle(0.05 cm) node[below]{$w$};
\filldraw (2,0) circle(0.05 cm) node[below]{$z$};
\filldraw (4,0) circle(0.05 cm) node[below]{$y$};
\filldraw (6,0) circle(0.05 cm) node[below]{$x$};
\filldraw (8,0) circle(0.05 cm) node[below]{$v$};
\draw (0,0) -- (8,0);
\draw (1, 0) node[above]{$4$};
\draw (3, 0) node[above]{$3$};
\draw (5, 0) node[above]{$2$};
\draw (7, 0) node[above]{1};

\end{tikzpicture}
\caption{}\label{fig3}
\end{figure}

Now, $d(v) \geq 6$, so $v$ is incident to at least two edges which are not colored from $\{1,2,3,4\}$. Clearly, both endpoints of these edges must be on $P$ to avoid a rainbow $P_5$-copy. Also, since we assume that $v$ is not in a rainbow $C_5$-copy, $vw$ cannot be such an edge. So, in fact, the two edges which are incident to $v$ and not colored from $\{1,2,3,4\}$ must be $vy$ and $vz$. Without loss of generality, $c(vy) = 5$  and $c(vz) = 6$. Every other edge incident to $v$ must be colored from $\{1,2,3,4\}$; in particular, to achieve $d(v) \geq 6$, $v$ must be incident to an edge colored $4$. The other endpoint of this edge cannot be on $P$, since $v$ is already adjacent to $x,y,z$ via edges of different colors, and $w$ is already incident to another edge of color $4$. So $v$ is incident to a vertex not yet drawn, say $u$, with $c(vu) = 4$. We update our drawing in Figure~\ref{fig4}.

\begin{figure}[h]
\centering
\begin{tikzpicture}
\filldraw (0,0) circle(0.05 cm) node[below]{$w$};
\filldraw (2,0) circle(0.05 cm) node[below]{$z$};
\filldraw (4,0) circle(0.05 cm) node[below]{$y$};
\filldraw (6,0) circle(0.05 cm) node[below]{$x$};
\filldraw (8,0) circle(0.05 cm) node[below]{$v$};
\filldraw (10,0) circle(0.05 cm) node[below]{$u$};
\draw (0,0) -- (10,0);
\draw (1, 0) node[above]{$4$};
\draw (3, 0) node[above]{$3$};
\draw (5, 0) node[above]{$2$};
\draw (7, 0) node[above]{1};
\draw (9,0) node[above]{4};
\draw (8,0) arc (0:180:3);
\draw (8,0) arc (0:180:2);
\draw (5,1.5) node{5};
\draw (3,2) node{6};
\end{tikzpicture}
\caption{}\label{fig4}
\end{figure}

We will now investigate potential neighbors of $w$. Firstly, we claim that $w$ can be adjacent only to vertices already depicted in Figure \ref{fig3}. Indeed, suppose $w'$ is a vertex not yet represented, and that $ww'$ is an edge. Obeying coloring rules, we must have $c(ww') \in \{ 1,2,3\}$, or else $v x y z w w'$ is a rainbow $P_5$-copy. However, whichever of these colors we chose, either $x y v z w w'$ or $x v y z w w'$ is rainbow. Thus, $N(w) \subseteq \{x,y,z,v,u\}$. Since none of $v x y z w w'$, $x y v z w w'$, $x v y z w w'$ use the vertex $u$, these observations also imply that $wu$ is not an edge.

We also claim that $wx$ is not an edge. Indeed, suppose edge $wx$ is present. Then $v y x w z v$ is a $C_5$-copy containing $v$, so must not be rainbow. Thus, $c(wx)$ must be in $\{5,6\}$ to obey coloring rules and avoid a rainbow $C_5$-copy. But then either $x v y z w x$ is a rainbow $C_5$-copy containing $v$, or $u v z y x w$ is a rainbow $P_5$-copy. We concude that $wx$ is not an edge.

Thus, the only possible neighbors of $w$ are $z,y,$ and $v$. Since $\delta(G) \geq 3$, $w$ must be adjacent to all three of these vertices to avoid a contradiction. We can check that to obey coloring rules and avoid rainbow-$C_5$ copies containing $v$, we must have $c(wv) \in \{2,3\}$ and $c(wy) \in \{1,6\}$. We will set $c(wy) = a$ and $c(wv) = b$.

Note also that we have now accounted for five neighbors of $v$; another must exist, and must be a vertex not depicted in Figure \ref{fig4}. We shall call this neighbor $s$. To ensure that $s v x y z w$ is not a rainbow $P_5$-copy, we must have $c(sv) \in \{1, 2, 3, 4 \}$. By coloring rules, $c(sv) \in \{2,3\}$. We shall set $c(sv) = c$. Note also that $b \neq c$, so if one of $b,c$ is determined, then the other is also.

We shall reflect our progress in Figure~\ref{fig5}.

\begin{figure}[h]
\centering
\begin{tikzpicture}
\filldraw (0,0) circle(0.05 cm) node[below left]{$w$};
\filldraw (2,0) circle(0.05 cm) node[below]{$z$};
\filldraw (4,0) circle(0.05 cm) node[below right]{$y$};
\filldraw (6,0) circle(0.05 cm) node[below]{$x$};
\filldraw (8,0) circle(0.05 cm) node[below right]{$v$};
\filldraw (10,0) circle(0.05 cm) node[below]{$u$};
\draw (0,0) -- (10,0);
\draw (1, 0) node[above]{$4$};
\draw (3, 0) node[above]{$3$};
\draw (5, 0) node[above]{$2$};
\draw (7, 0) node[above]{1};
\draw (9,0) node[above]{4};
\draw (8,0) arc (0:180:3);
\draw (8,0) arc (0:180:2);
\draw (5,1.5) node{5};
\draw (3,2) node{6};
\draw (3,-1.5) node{$a$};
\draw (5, -3.5) node{$b$};

\filldraw (10,1.5) circle (0.05 cm) node[above]{$s$}; 
\draw (8,0) -- (10,1.5) node[pos=0.5, above]{$c$};

\draw (0,0) arc (180:360:2);
\draw (0,0) arc (180:360:4);

\end{tikzpicture}
\caption{}\label{fig5}
\end{figure}

We shall next examine $x$. We begin by observing that $x$ is not a neighbor of $z$ or $u$. Recall that we have already shown that $x$ is not a neighbor or $w$. 

Suppose $xz$ is an edge. By coloring rules, we have $c(xz) \in \{5,7\}$. Therefore, $u v x z y w$ is a rainbow-$P_5$ copy unless $a = 1$. Given $a = 1$, we have that $s v z x y w$ is a rainbow $P_5$-copy unless $c = 2$. This implies $b = 3$. But now $u v w y x z$ is a rainbow-$P_5$ copy. We conclude that $xz$ is not an edge.

Next, suppose $xu$ is an edge. We must have $c(xu) \in \{2,4,5,6\}$, else $w z v y x u$ is a rainbow $P_5$-copy, and $c(xu) \in \{1,3,4,5\}$, else $uxvyzw$ is a rainbow $P_5$-copy. Thus, $c(xu) \in \{4,5\}$. By coloring rules, $c(xu) \neq 4$. Moreover, if $c(xu) = 5$, then $v u x y z v$ is a rainbow $C_5$-copy containing $v$. Thus, $xu$ also is not an edge.

Now, if $d(x) \geq 3$, then either $xs$ is an edge, or $x$ has a neighbor, say $t$, which is not depicted in Figure~\ref{fig5}. We examine the cases separately; in each, we show that to avoid a rainbow $P_5$-copy, $G$ must contain a vertex of degree at most $2$.

\textbf{Case 1:} $xs$ is an edge.

Observe that $c(xs) = 4$, else one of $s x v y z w$, $s x y v z w$, $s x y z v u$ is a rainbow $P_5$-copy. Consider potential neighbors of $s$. Observe that $su$ is not an edge, since if so, one of $u s x y v z$, $u s x v y z$, $u s x y z v$ is a rainbow $P_5$-copy. Analogously, $s$ can be adjacent to no vertex which is not depicted in Figure~\ref{fig5}. We have also argued previously that $w$ is only adjacent to $z, y,$ and $v$, so $sw$ is not an edge. 

We next consider $z$. If $sz$ is an edge, observe that $c(sz) \in \{1,2\}$, since $u v x y z s$ is a rainbow $P_5$-copy under any other legal color assignment. Moreover, if $c(sz) = 2$, then $c = 3$, and $zsvywz$ is a rainbow $C_5$-copy containing $v$. So $c(sz) = 1$. Now, consider the paths $s z w v y x$ and $x y v s z w$. The colors on these paths are, respectively, $1, 4, b, 5, 2$ and $2, 5, c, 1, 4$. Recall that $b \in \{2,3\}$ and $c \in \{2,3\}$; moreover, $b \neq c$. The first path is rainbow unless $b = 2$, and the second path is rainbow unless $c = 2$; since both cannot simultaneously be true, we conclude that one path is a rainbow $P_5$-copy. Thus, $sz$ is not an edge. 

Now, to avoid a contradiction to the minimum degree condition on $G$, $sy$ must be an edge. We first claim that $c(sy) = 1$. Observe that both $x v s y z w$ and $z y s x v w$ are $P_5$-copies, respectively colored $1, c, c(sy), 3, 4$ and $3, c(sy), 4, 1, b$. Recall that either $b = 2$ or $c = 2$, so the colors on one of these two paths are (not in order) $1,2,3,4,c(sy)$. By coloring rules, $c(sy)$ is not in $\{2,3,4\}$ (since $c(yx) = 2, c(yz) = 3$, and $c(sx) = 4$), so we must have $c(sy) = 1$. Since $c(sy) \neq a$ by coloring rules, and $a \in \{ 1,6 \}$, this means $a = 6$. Now, $z y w v x s$ is a $P_5$-copy colored $3,6,b,1,4$, so we must have $b = 3$ to avoid a rainbow $P_5$-copy, forcing $c = 2$. 

Thus, if $xs$ is an edge, then $sy$ must also be an edge, and we can fix the colors of all edges indicated in Figure~\ref{fig5}. We reflect this state of affairs in Figure~\ref{fig6}.

\begin{figure}
\centering
\begin{tikzpicture}
\filldraw (0,0) circle(0.05 cm) node[below left]{$w$};
\filldraw (2,0) circle(0.05 cm) node[below]{$z$};
\filldraw (4,0) circle(0.05 cm) node[below right]{$y$};
\filldraw (6,0) circle(0.05 cm) node[below]{$x$};
\filldraw (8,0) circle(0.05 cm) node[below right]{$v$};
\filldraw (10,0) circle(0.05 cm) node[below]{$u$};
\draw (0,0) -- (10,0);
\draw (1, 0) node[above]{$4$};
\draw (3, 0) node[above]{$3$};
\draw (5, 0) node[above]{$2$};
\draw (7, 0) node[above]{1};
\draw (9,0) node[above]{4};
\draw (8,0) arc (0:180:3);
\draw (8,0) arc (0:180:2);
\draw (5,1.5) node{5};
\draw (3,2) node{6};
\draw (3,-1.5) node{$6$};
\draw (5, -3.5) node{$3$};

\filldraw (10,1.5) circle (0.05 cm) node[above right]{$s$}; 
\draw (8,0) -- (10,1.5) node[pos=0.5, above]{$2$};

\draw (0,0) arc (180:360:2);
\draw (0,0) arc (180:360:4);

\draw (6,0) arc (180:43:2.3);

\draw (4,0) arc (-180:30:3.2);

\draw (8,2.85) node[below right]{4};

\draw (10,-1.5)  node[below right]{1};

\end{tikzpicture}
\caption{}\label{fig6}
\end{figure}

Given the configuration in Figure~\ref{fig6}, we shall observe that $u$ cannot satisfy the minimum degree condition. We have seen already that $uw$ and $ux$ are not edges, and have seen that, given $xs$ is an edge, $su$ is not an edge. Observe that $u$ is adjacent to no vertex not yet drawn (say $t$), since if so, one of $t u v x y z$, $t u v s y w$, $t u v w y s$, $t u v w y x$ is a rainbow $P_5$-copy. Observe also that $u$ cannot be adjacent to $y$; if so, by coloring rules, $c(uy)$ must be a color not yet used, say $7$, and then $u y s v z w$ is a rainbow $P_5$-copy. Thus, $d(u) \leq 2$.


\textbf{Case 2:} $xs$ is not an edge; thus, $x$ has a neighbor $t$ which is not depicted in Figure~\ref{fig5}.

Observe that $c(xt) = 4$, else one of $t x v y z w$, $t x y v z w$, $t x y z v u$ is a rainbow $P_5$-copy. Therefore, $G$ contains the subgraph drawn in Figure~\ref{fig7}.

\begin{figure}[h]
\centering
\begin{tikzpicture}
\filldraw (0,0) circle(0.05 cm) node[below left]{$w$};
\filldraw (2,0) circle(0.05 cm) node[below]{$z$};
\filldraw (4,0) circle(0.05 cm) node[below right]{$y$};
\filldraw (6,0) circle(0.05 cm) node[below right]{$x$};
\filldraw (8,0) circle(0.05 cm) node[below right]{$v$};
\filldraw (10,0) circle(0.05 cm) node[below]{$u$};
\draw (0,0) -- (10,0);
\draw (1, 0) node[above]{$4$};
\draw (3, 0) node[above]{$3$};
\draw (5, 0) node[above]{$2$};
\draw (7, 0) node[above]{1};
\draw (9,0) node[above]{4};
\draw (8,0) arc (0:180:3);
\draw (8,0) arc (0:180:2);
\draw (5,1.5) node{5};
\draw (3,2) node{6};
\draw (3,-1.5) node{$a$};
\draw (5, -3.5) node{$b$};

\filldraw (10,1.5) circle (0.05 cm) node[above]{$s$}; 
\draw (8,0) -- (10,1.5) node[pos=0.5, above]{$c$};

\draw (0,0) arc (180:360:2);
\draw (0,0) arc (180:360:4);

\filldraw (6, - 2) circle(0.05 cm) node[right]{$t$};
\draw (6,0) -- (6,-2) node[pos= 0.5, left]{4};

\end{tikzpicture}
\caption{}\label{fig7}
\end{figure}

We argue that $d(t) \leq 2$. 

Observe that $t$ is not adjacent to any vertex which is not yet drawn (say $r$), else one of $r t x y v z$, $r t x v y z$, $r t x v z y$ is a rainbow-$P_5$ copy. Analogously, $t$ is not adjacent to $s$ or $u$. We have already seen that $w$ is not adjacent to $t$ (or indeed, to any vertex not in $\{z,y,v\}$). $v$ already has incident edges of every color from $\{1,2,3,4,5,6\}$ (since $b,c \in \{2,3\}$), so $tv$ cannot be an edge, as it would receive a new color, say $7$, making $t v x y z w$ a rainbow-$P_5$ copy. So, the only vertices to which $t$ can be adjacent (aside from $x$) are $y$ and $z$. 

Suppose $ty$ is an edge. By coloring rules, $c(ty)$ is not in $\{a, 2, 3, 4\}$. Observe that $s v w y t x$ is now a $P_5$-copy, with colors $c, b, a, c(ty), 4$. We know that $b,c$ are in $\{2,3\}$ and are not equal, and that $a \in \{1,6\}$ is not equal to $c(ty)$. So, $c,b,a,c(ty),$ and $4$ must all be distinct colors, and thus $ty$ is not an edge, as its presence yields a rainbow $P_5$-copy. Thus, $d(t) \leq 2$, since its only neighbors are $x$ and possibly $z$.

Thus, to avoid a rainbow $P_5$-copy in $G$, either $d(x) \leq 2$ or $x$ has a neighbor of degree at most 2. In any case, we achieve a contradiction to the minimum degree hypothesis on $G$. We conclude that, if $v \in \overline{V'}$ has $d(v) \geq 6$, then $v$ is not the endpoint of a rainbow $P_4$-copy. \end{proof}

From Lemma \ref{length-4}, we can quickly derive the following corollary, which will also be of use in our proof of the main result. The \textit{distance} between vertices $x,y$ in a graph $G$ is the smallest $\ell$ such that $G$ contains a $P_{\ell}$-copy with endpoints $x,y$. (If no such $\ell$ exists, we say that $x,y$ are at infinite distance.)

\begin{cor}\label{distance-2}
Suppose $v \in \overline{V'}$ has $d(v) \geq 6$ and $u$ is a vertex at distance $2$ from $v$ with $d(u) \geq 6$. Then $u$ is also in $\overline{V'}$.
\end{cor}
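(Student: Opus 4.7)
I argue by contradiction: suppose $u \in V'$, so $u$ lies on a rainbow $C_5$, which I label $uu_1u_2u_3u_4u$ with edge colors $c_1, c_2, c_3, c_4, c_5$ in order. I fix a common neighbor $w$ of $u$ and $v$; by Lemma \ref{neighbor-lemma} applied to $v$, $w \in \overline{V'}$, so neither $v$ nor $w$ lies on this $C_5$.

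The first step is to restrict the colors $c(wu)$ and $c(vw)$. The $P_5$-copy $wuu_1u_2u_3u_4$ cannot be rainbow, which combined with proper coloring at $u$ forces $c(wu) \in \{c_2, c_3, c_4\}$. Lemma \ref{length-4} applied to $v$ moreover says the $P_4$-copies $vwuu_1u_2$ and $vwuu_4u_3$ (both ending at $v$) are not rainbow. If $c(wu) = c_3$, the two $P_4$-conditions force $c(vw) \in \{c_1, c_2\} \cap \{c_4, c_5\} = \emptyset$, a contradiction; by the symmetry of reversing the cycle, it suffices to treat $c(wu) = c_2$, and the second $P_4$-condition then yields $c(vw) \in \{c_4, c_5\}$.

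In this remaining case, I will contradict the hypothesis $d(u) \geq 6$ by showing $d(u) \leq 5$. For any non-cycle neighbor $x \neq w$ of $u$, the non-rainbow $P_5$-copy $xuu_1u_2u_3u_4$, combined with proper coloring and $c(wu) = c_2$, forces $c(ux) \in \{c_3, c_4\}$; hence $u$ has at most two such neighbors. If $uu_3$ is an edge, proper coloring at $u$ and $u_3$ forces $c(uu_3) \notin \{c_1, \ldots, c_5\}$, so the $P_4$-copy $vwuu_3u_2$ is rainbow, contradicting Lemma \ref{length-4}; hence $uu_3$ is not an edge. The analogous argument on $vwuu_2u_3$ shows that if $uu_2$ is an edge, then $c(uu_2) = c(vw) = c_4$. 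Since proper coloring at $u$ admits only one $c_4$-colored edge, the chord $uu_2$ (when present) excludes the non-cycle $c_4$-neighbor of $u$, and the final count $d(u) \leq |\{u_1, u_4\}| + |\{w\}| + 2 = 5$ holds in every case.

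The main obstacle will be the chord/neighbor bookkeeping in the final step: one must verify both that $uu_3$ is impossible and that a potential $uu_2$-chord is forced to color $c_4$, so that it does not push $d(u)$ beyond $5$. This requires carefully tracking how Lemma \ref{length-4} applied to various $P_4$-copies ending at $v$ interacts with proper coloring at $u$.
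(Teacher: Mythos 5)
Your proof is correct, but it proceeds by a genuinely different route from the paper. The paper uses the hypothesis $d(u) \geq 6$ only once, at the outset: it extracts an edge at $u$ whose color $f$ is new relative to the five cycle colors, observes that such an edge must be a chord of $C$ (else an immediate rainbow $P_5$), and then notes that since $a,b,c,d,e,f$ are six distinct colors, at most two of them lie in $\{1,2\}$ (the colors of $vw$ and $wu$); hence one of the three $P_4$-copies $vwuu_1u_2$, $vwuu_2u_3$, $vwuu_4u_3$ is rainbow, contradicting Lemma~\ref{length-4} directly. You never use that new-colored edge; instead you first pin down $c(wu)$ (to $c_2$ up to reversal, after eliminating $c_3$) and $c(vw) \in \{c_4,c_5\}$ using rainbow-$P_5$-freeness and Lemma~\ref{length-4}, and then bound $d(u) \leq 5$ by showing every neighbor of $u$ other than $u_1,u_4,w$ must be joined by an edge colored $c_3$ or $c_4$ (with $uu_3$ impossible and $uu_2$ forced to color $c_4$), contradicting $d(u) \geq 6$. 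So the paper's contradiction is ``a rainbow $P_4$ ends at $v$,'' while yours is ``$d(u) \leq 5$''; the paper's counting trick makes the argument shorter and avoids your color bookkeeping, whereas your version gives more explicit local structure around $u$ and, as a small bonus, explicitly justifies via Lemma~\ref{neighbor-lemma} that the middle vertex $w$ lies off the cycle, a point the paper leaves implicit. All of your individual deductions check out, so the extra length is the only cost.
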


\begin{proof}
Suppose for a contradiction that $u$ is contained in a rainbow $C_5$-copy, $C$. By assumption, there exists a $P_2$-copy connecting $v$ and $u$, which is necessarily rainbow. We shall label the edges of $P$ with colors $1,2$, and the edges of $C$ with colors $a,b,c,d,e$. Note that, since $d(u) \geq 6$, $u$ must be incident to an edge of a color $f$ which is not contained in $\{a,b,c,d,e\}$. The other endpoint of this edge must be on $C$, or a rainbow $P_5$-copy is immediately created. We depict this in Figure~\ref{fig8}, adding labels to previously unnamed vertices for convenience.

\begin{figure}[h]
\centering
\begin{tikzpicture}
\filldraw (0,0) circle(0.05 cm) node[below]{$v$};
\filldraw (2,0) circle(0.05 cm) node[below]{$w$};
\filldraw (4,0) circle(0.05 cm) node[below]{$u$};
\filldraw (6,2) circle(0.05 cm) node[above]{$u_1$};
\filldraw (6,-2) circle(0.05 cm) node[below]{$u_4$};
\filldraw (8,2) circle(0.05 cm) node[right]{$u_2$};
\filldraw (8,-2) circle(0.05 cm) node[right]{$u_3$};

\draw (0,0) -- (2,0) node[pos=0.5, above]{1};
\draw (2,0) -- (4,0) node[pos=0.5, above]{2};
\draw (4,0) -- (6,2) node[pos=0.5, above left]{$a$};
\draw (6,2) -- (8,2) node[pos=0.5, above]{$b$};
\draw (8,2) -- (8,-2) node[pos=0.5, right]{$c$};
\draw (8,-2) -- (6,-2) node[pos=0.5, below]{$d$};
\draw (6,-2) -- (4,0) node[pos=0.5, below left]{$e$};
\draw (4,0) -- (8,2) node[pos=0.5, below right]{$f$};

\end{tikzpicture}
\caption{}\label{fig8}
\end{figure}

Now, since $C$ is rainbow and $f$ is distinct from $a,b,c,d,e$, at most two of $a,b,c,d,e,f$ are in $\{1,2\}$. Thus, one of $v w u u_1 u_2$, $v w u u_2 u_3$, $v w u u_4 u_3$ is a rainbow $P_4$-copy ending at $v$, a contradiction by Lemma \ref{length-4}
\end{proof}

Finally, we will need the following result. Although the next lemma holds for any $v \in V$, we will apply it in particular to vertices of high degree in $\overline{V'}$, in order to more easily build rainbow $P_4$-copies. 

\begin{lemma}\label{P3 lemma}
Let $v \in V$. Then $v$ is the endpoint of a rainbow $P_3$-copy.
\end{lemma}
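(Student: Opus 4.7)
The plan is to argue by contradiction: suppose $v$ is not the endpoint of any rainbow $P_3$-copy. Since $\delta(G) \geq 3$, for any $x \in N(v)$ and $y \in N(x) \setminus \{v\}$, the path $v,x,y$ is automatically a rainbow $P_2$-copy; by assumption, no such $P_2$-copy can be extended to a rainbow $P_3$-copy. I will exploit this non-extendability to force the component of $v$ to have very low average degree, contradicting the standing assumption that every component of $G$ has average degree strictly greater than $5$.

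The key observation is that non-extendability of $v,x,y$ means every $w \in N(y) \setminus \{v,x\}$ satisfies $c(yw) \in \{c(vx), c(xy)\}$; proper coloring at $y$ rules out $c(xy)$, leaving $c(yw) = c(vx)$. Proper coloring then permits at most one edge of color $c(vx)$ incident to $y$, so $|N(y) \setminus \{v,x\}| \leq 1$. Combined with $d(y) \geq 3$, this forces $v \in N(y)$ and $d(y) = 3$. Applying this observation to the $P_2$-copy $v, y, z$ for an arbitrary $y \in N(v)$ and $z \in N(y) \setminus \{v\}$ yields $z \in N(v)$, giving $N(y) \setminus \{v\} \subseteq N(v)$ for every $y \in N(v)$. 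Now for any $y \in N(v)$, the bound $d(y) \geq 3$ guarantees some $z \in N(y) \cap N(v)$ with $z \neq v$; applying the observation to the $P_2$-copy $v, z, y$ then yields $d(y) = 3$.

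Setting $k = d(v)$, the first structural fact shows that $C := \{v\} \cup N(v)$ is closed under taking neighbors, and hence is precisely the component of $G$ containing $v$. The second fact gives $\sum_{u \in C} d(u) = k + 3k = 4k$, so $C$ has $k+1$ vertices and $2k$ edges, yielding average degree $\tfrac{4k}{k+1} < 4 < 5$. This contradicts the standing assumption that every component of $G$ has average degree strictly greater than $5$, completing the proof. The main subtlety is the last structural step: the non-extendability argument directly constrains only the final vertex of a rainbow $P_2$-copy beginning at $v$, so to conclude $d(y) = 3$ for an arbitrary $y \in N(v)$, one must first use the preceding step ($N(y) \setminus \{v\} \subseteq N(v)$) to realize each such $y$ as an endpoint via an auxiliary rainbow $P_2$-copy $v, z, y$ with $z \in N(v)$.
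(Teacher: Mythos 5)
Your proof is correct, and while its overall frame matches the paper's (assume no rainbow $P_3$-copy ends at $v$, show every $u \in N(v)$ has $d(u)=3$ and $N(u)\subseteq N(v)\cup\{v\}$, then contradict the standing assumption that every component of $G$ has average degree greater than $5$), the way you establish the structural claim is genuinely different and considerably more streamlined. The paper proves the claim by configuration-building case analysis: starting from $u\in N(v)$ it introduces auxiliary vertices, rules out edges one at a time, and eventually forces $\{v,u,x,y\}$ to span a properly colored $K_4$ before reading off the degree and neighborhood conditions. You instead make one local observation about the far endpoint $y$ of an arbitrary $P_2$-copy $v,x,y$: non-extendability forces every edge from $y$ to $N(y)\setminus\{v,x\}$ to carry the color $c(vx)$, and properness at $y$ permits at most one such edge, so $d(y)\le 3$; the minimum-degree condition $\delta(G)\ge 3$ then gives $d(y)=3$ and $v\in N(y)$. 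Bootstrapping this — first to get $N(y)\setminus\{v\}\subseteq N(v)$ for each $y\in N(v)$, and then, as you correctly flag, re-applying it along $v,z,y$ with $z\in N(y)\cap N(v)$ to pin down $d(y)=3$ for the neighbors of $v$ themselves — yields the claim with no case analysis, and even gives the slightly sharper bound $4k/(k+1)<4$ on the component's average degree. Both arguments are valid; yours trades the paper's explicit structural description (the properly colored $K_4$) for brevity, which is harmless since only the degree count is used in the contradiction.
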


\begin{proof}

Suppose $v \in V$ is not the endpoint of a rainbow $P_3$-copy. We claim that the following holds. If $u$ is a neighbor of $v$, then $d(u) = 3$ and $N(u) \subseteq N(v) \cup \{v\}$. If we can establish this claim, then the lemma is proved, as follows. Given that for any $u \in N(v)$, we have $N(u) \subseteq N(v) \cup \{v\}$, we observe that $\{v\} \cup N(v)$ must induce a component of $G$. Moreover, since every vertex in $N(v)$ has degree 3, the average degree in this component is 
$$\frac{d(v) + 3d(v)}{d(v) + 1} < 5,$$
a contradiction, as we suppose that every component of $G$ has average degree greater than $5$. Thus, we will have that every $v \in V$ is the endpoint of a rainbow $P_3$-copy.

To establish the claim, suppose $v \in V$ is not the endpoint of a rainbow $P_3$-copy, and let $u$ be a neighbor of $v$. Without loss of generality, $c(uv) = 1$. Since $\delta(G) \geq 3$, $u$ has another neighbor, say $x$, and $c(ux)$ cannot equal $c(uv)$. Say $c(ux) = 2$. Now, $x$ has at least two more neighbors, so must have at least one neighbor not equal to $v$, say $y$. Since we assume $v$ is not the endpoint of a rainbow $P_3$-copy, we must have $c(xy) = 1$. Observe that $x$ can have no other neighbor except $v$ without creating a rainbow $P_3$-copy ending in $v$, so $xv$ must be an edge. We must have $c(xv) = 3$, since $x$ is already incident to edges of colors $1$ and $2$.

We consider the neighbors of $u$; there must be at least one more. Either $uy$ is an edge, or $u$ is adjacent to some vertex $w$ not already considered. We wish to show that there is no such edge $uw$. If there is, then $c(uw) = 3$ to avoid a rainbow-$P_3$ ending at $v$. Thus, $u$ is adjacent to only one new vertex $w$. Now, $uy$ cannot be an edge, since then $c(uy)$ would be a new color, say $4$, which would create a rainbow-$P_3$ copy ending at $v$. So $y$ is adjacent to two more vertices, neither of which are $u$. If $y$ is adjacent to a vertex $z$ not already considered, then we must have $c(yz) = 3$ to avoid a rainbow $P_3$-copy ending at $v$. $y$ cannot be adjacent to $w$, as $c(yw)$ would be $2$ or $4$, creating a rainbow $P_3$-copy ending at $v$. So to achieve degree $3$, $y$ must be adjacent to $v$ and a new vertex, $z$. We have $c(yz) = 3$, and must have $c(yv) = 2$ to avoid a rainbow $P_3$-copy ending at $v$.

Now, consider $w$. We have already noted that $wy$ is not an edge; nor are $wv$ or $wx$, since these would necessarily receive a new color and thus create a rainbow-$P_3$ copy ending at $v$. Any coloring of $wz$ which is permitted by coloring rules also creates a rainbow $P_3$-copy ending at $v$, so neither is $wz$ an edge. Finally, if $w$ is adjacent to a new vertex, say $s$, then $c(ws) = 1$ to avoid a rainbow $P_3$-copy ending at $v$, so $w$ has at most one neighbor not already considered. But this implies that $d(w) \leq 2$, a contradiction. We conclude that $uw$ is not an edge. 

Thus, to achieve degree $3$, $u$ is adjacent to $y$, and $c(uy) = 3$ is forced. Given this edge, $y$ must be adjacent to $v$ to achieve degree $3$, with $c(yv) = 2$. Now, $v,u,x,y$ form a properly colored $K_4$. It is clear that if any of $u,x,y$ have another neighbor, the incident edge will create a rainbow $P_3$-copy ending at $v$, so in particular, $N(u) = \{v, x, y\} \subset N(v) \cup \{v\}$, and $d(u) = 3$. Since $u$ was chosen from $N(v)$ arbitrarily, we are done.  
\end{proof}

\section{Main Result}

We are now ready to prove our main result. As in Section 2, we may assume that we work in a properly edge-colored, rainbow-$P_5$-free graph $G$ with $\delta(G) \geq 3$ and such that every component of $G$ has average degree greater than $5$. The vertex set of $G$ is again partitioned as $V' \cup \overline{V'}$, where $V'$ is the set of vertices which lie in some rainbow $C_5$-copy in $G$. 

\begin{theorem} 
$\mathrm{ex}^*(n, P_5) \leq \frac{5n}{2}$. 
\end{theorem}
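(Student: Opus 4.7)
The plan is to show that $\sum_{v \in \overline{V'}} d(v) \leq 5|\overline{V'}|$; combined with Lemma~\ref{cycle-lemma}, which gives $\sum_{v \in V'} d(v) \leq 5|V'|$, this yields $2e(G) \leq 5n$ and hence $d(G) \leq 5$, contradicting the standing assumption $d(G) > 5$ and so proving the theorem. Let $H = \{v \in \overline{V'} : d(v) \geq 6\}$ be the set of high-degree vertices of $\overline{V'}$. Since every $v \in \overline{V'} \setminus H$ already contributes at most $5$ to the degree sum, it suffices to construct a family of pairwise disjoint \emph{local pairings} $\{\{v\} \cup L(v) : v \in H\}$ with $L(v) \subseteq N(v)$ satisfying
\[
d(v) + \sum_{u \in L(v)} d(u) \;\leq\; 5(|L(v)|+1).
\]
By Lemma~\ref{neighbor-lemma}, each $L(v)$ automatically lies in $\overline{V'}$, so the entire family lives inside $\overline{V'}$ as required.

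To produce $L(v)$ for $v \in H$, I would exploit the following consequence of Lemma~\ref{length-4}: for any $u \in N(v)$ and any rainbow $P_3$-copy $v{-}u{-}y{-}z$, the forbidden extension to a rainbow $P_4$-copy $v{-}u{-}y{-}z{-}w$ forces every $w \in N(z) \setminus \{v,u,y\}$ to satisfy $c(zw) \in \{c(vu), c(uy)\}$, giving $d(z) \leq 5$ and in particular $d(z) \leq 3$ when $z \notin N(v) \cup N(u)$. Ranging this observation over the many rainbow $P_3$-copies starting at $v$ (which exist by Lemma~\ref{P3 lemma} together with $d(v) \geq 6$) identifies many low-degree vertices in the second neighborhood of $v$. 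The key technical step is to translate these distance-$3$ degree bounds into a bound on the number of degree-$\leq 3$ vertices in $N(v)$ itself, large enough to build a valid local pairing. I expect this to proceed by a case analysis on the degrees of neighbors of $v$ of intermediate degree ($4$ or $5$): configurations with too many such neighbors generate additional rainbow $P_3$-copies from $v$ whose far endpoints are forced back into $N(v)$, eventually contradicting the degree hypothesis $d(v) \geq 6$.

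For disjointness, Corollary~\ref{distance-2} places any two vertices of $H$ at distance at most $2$ inside $\overline{V'}$, so the structural analysis above applies symmetrically to both. When two pairings $\{v\} \cup L(v)$ and $\{v'\} \cup L(v')$ would share a vertex $w$, this $w$ has degree at most $3$, and the slack in the averaging inequality (typically strict because $L(v)$ consists predominantly of degree-$3$ vertices) allows us to assign $w$ to one side without violating either bound. A greedy procedure over $H$, handling adjacent high-degree pairs via Lemma~\ref{length-4} applied at both endpoints, then produces the required pairwise disjoint family. The main obstacle I anticipate is the interior counting step of translating the distance-$3$ degree bound into a bound on degree-$\leq 3$ vertices in $N(v)$, since Lemma~\ref{length-4} gives its cleanest constraints on vertices at distance $3$ from $v$ via a rainbow $P_3$, and pushing these bounds back to $v$'s immediate neighborhood will require careful combinatorial accounting.
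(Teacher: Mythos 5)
Your global framework (local pairings in $\overline{V'}$, the averaging step, and the appeal to Lemmas~\ref{cycle-lemma}, \ref{neighbor-lemma}, \ref{length-4}, \ref{P3 lemma} and Corollary~\ref{distance-2}) matches the paper, but the actual construction of $L(v)$ --- which is the entire content of the theorem --- is left as a plan rather than carried out, and the plan you sketch is not the one that works in the paper. You propose to bound degrees of vertices at distance $3$ from $v$ (the far endpoints $z$ of rainbow $P_3$-copies $vuyz$) and then ``translate'' these bounds back into a count of low-degree vertices inside $N(v)$; you yourself flag this translation as the main obstacle, and no argument is given for it. The paper does something different: it splits on whether $v$ lies in a rainbow $C_4$-copy, takes $L(v)$ to be the neighbors of $v$ joined to $v$ by edges whose colors lie outside the colors of that $C_4$ (or of a rainbow $P_3$ from Lemma~\ref{P3 lemma} when no rainbow $C_4$ exists), shows directly that each such $w$ has $d(w)\leq 4$ because every edge at $w$ is forced to repeat a color of the fixed structure (else a rainbow $P_4$ ending at $v$ or a rainbow $P_5$ appears), and then --- crucially --- shows that $v$ is the \emph{only} neighbor of $w$ of degree exceeding $5$, via a further case analysis that invokes Corollary~\ref{distance-2} to apply Lemma~\ref{length-4} at distance-two vertices. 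That last property is what makes the pairings pairwise disjoint; none of this is in your proposal.

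Your substitute for disjointness also does not work as stated. Corollary~\ref{distance-2} does not ``place any two vertices of $H$ at distance at most $2$''; it only says that a high-degree vertex at distance $2$ from a high-degree vertex of $\overline{V'}$ is itself in $\overline{V'}$. And the ``slack'' you invoke to reassign a shared vertex is not guaranteed: in the paper's Case~1 the averaging inequality can be tight (with $|L(v)| = d(v)-5$ and all members of degree $4$ the average is exactly $5$), and deleting a shared vertex of degree at most $3$ from such a pairing \emph{raises} its average above $5$, so a greedy reassignment can break the bound. The paper avoids this entirely by building the ``only high-degree neighbor is $v$'' condition into $L(v)$, so that no vertex can occur in two pairings in the first place. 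In short, the averaging shell is right, but the structural case analysis that produces valid, automatically disjoint pairings --- the heart of the proof --- is missing, and the two mechanisms you offer in its place (distance-$3$ bounds pushed back to $N(v)$, and slack-based reassignment) are respectively unsubstantiated and unsound.
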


\begin{proof}

Our goal is to show, for a contradiction, that $d(G) \leq 5$. By Lemma \ref{cycle-lemma}, we know that the average degree in $V'$ is at most $5$, so we will be done if we can also show that $\overline{V'}$ has average degree at most 5. Define $H(\overline{V'}) := \{v \in \overline{V'}: d(v) > 5\}$. For each $v \in H(\overline{V'})$, we aim to find a set $L(v) \subseteq N(v)$ such that the following hold:

\begin{enumerate}
    \item $\{ v\} \cup L(v)$ is a local pairing in $\overline{V'}$
    \item For any $u \in L(v)$, we have $d(u) \leq 5$ and $N(u) \cap H(\overline{V'}) = \{v\}.$
    
\end{enumerate}

Suppose that for every $v \in H(\overline{V'})$, we can find such a set $L(v)$. Then it immediately follows that $\overline{V'}$ has average degree at most $5$. Indeed, by the definition of a local pairing in $\overline{V'}$, condition (1) gives that each $\{v\} \cup L(v)$ is a subset of $\overline{V'}$ with average degree at most 5. By condition (2), each vertex in $\overline{V'}$ appears in at most one local pairing $\{v\} \cup L(v)$. Thus, setting $S$ to be the set of vertices in $\overline{V'}$ which are not contained in any of the selected local pairings in $\overline{V'}$, we have
$$\sum_{v \in \overline{V'}} d(v) = \sum_{v \in H(\overline{V'})} \left( d(v) + \sum_{u \in L(v)} d(u) \right) + \sum_{v \in S} d(v) \leq \sum_{v \in H(\overline{V'})} 5(|L(v)|+1) + 5|S| = 5|\overline{V'}|.$$

It thus only remains to show that such a set $L(v)$ can be found for every vertex $v \in H(\overline{V'})$. By Lemma $\ref{neighbor-lemma}$, we have that $\{v\} \cup N(v) \subseteq \overline{V'}$ for any $v \in H(\overline{V'})$, so any potential local pairing will indeed be contained in $\overline{V'}$. Fix $v \in H(\overline{V'})$ and consider $N(v)$. We distinguish two cases. Throughout both, recall that by Lemma \ref{length-4}, we may assume that $v$ is not the endpoint of a rainbow $P_4$-copy. 

\textbf{Case 1:} $v$ lies in a rainbow $C_4$-copy, $C$.

We'll label the vertices of $C$, so that $C = v x y z v$, and the edge colors on $C$ are from $\{1,2,3,4\}$. Since $d(v) \geq 6$, $v$ is incident to at least two edges which are not colored from $\{1,2,3,4\}$. One of these edges may be incident to $y$, the vertex on $C$ to which we have not already specified an adjacency, but one must be incident to a new vertex, say $w$. We draw the situation in Figure~\ref{fig9}.

\begin{figure}[h]
\centering
\begin{tikzpicture}
\filldraw (0,0) circle(0.05 cm) node[below left]{$v$};
\filldraw (-1,1) circle(0.05 cm) node[left]{$x$};
\filldraw (0,2) circle(0.05 cm) node[above]{$y$};
\filldraw (1,1) circle(0.05 cm) node[right]{$z$};
\filldraw (0,-2) circle(0.05 cm) node[below left]{$w$};
\draw (0,0) -- (-1,1) node[pos=0.6, below]{1};
\draw (-1,1)-- (0,2) node[pos=0.4,above]{2};
\draw (0,2) -- (1,1) node[pos=0.6,above]{3};
\draw (1,1) -- (0,0) node[pos=0.4,below]{4};
\draw (0,0) -- (0,-2) node[pos=0.5,right]{5};

\end{tikzpicture}
\caption{}\label{fig9}
\end{figure}

Consider the possible neighbors of $w$. If $w$ is adjacent to $x$, then $c(wx)$ must be $3$, else $v w x y z$ is a rainbow $P_4$-copy ending at $v$. Similarly, if $wz$ is an edge, then $c(wz) = 2$. If $w$ is adjacent to a new vertex, say $u$, then we have $c(wu) \in \{2,3\}$, or else one of $u w v x y z$, $u w v z y x$ is a rainbow $P_5$-copy. Thus, if $w$ is adjacent to $u$, then it is not adjacent to one of $x,z$; if $w$ is adjacent to two new vertices, then it is not adjacent to either $x$ or $z$. Finally, $w$ may be adjacent to $y$. We conclude from this analysis that $d(w) \leq 4$.

The vertex $w$ is one of the neighbors of $v$ which we will ultimately include in $L(v)$, so we must verify that $w$ has no other neighbor of degree greater than $5$. Observe first $d(x) \leq 5$, since any edge incident to $x$ whose other endpoint is not on $C$ must be colored from $\{1,2,3,4\}$ to avoid creating a rainbow $P_4$-copy ending at $v$. Analogously, $d(z) \leq 5$. 

Suppose next that $w$ is adjacent to a vertex $u$ which is not on $C$. We have seen that $c(uw) \in \{ 2,3 \}$; since the colors are to this point symmetric, we may assume that $c(uw) = 2$. We now bound $d(u)$. 

First, if $u$ is adjacent to a vertex not yet described, say $s$, then $c(us) \in \{ 3,4,5 \}$, since otherwise $s u w v z y$ is a rainbow $P_5$-copy. Note that $u$ is not adjacent to $z$, since if so, either $v x y z u$ or $v w u z y$ is a rainbow $P_4$-copy ending at $v$. If $ux$ is an edge, then $c(ux) \in \{3,4\}$, since otherwise $v z y x u$ is a rainbow $P_4$-copy ending at $v$.
Thus, $d(u) \leq 5$, since $u$ is adjacent to $w$, may be adjacent to $y$, and any other incident edge to $u$ must be colored from $\{3,4,5\}$. 

Finally, $w$ may be adjacent to $y$. We wish to show that, if $wy$ is an edge, then $d(y) \leq 5$. Suppose for a contradiction that $wy$ is an edge and $d(y) \geq 6$. By Corollary~\ref{distance-2}, $y \in \overline{V'}$, and so by Lemma~\ref{length-4}, $y$ is not the endpoint of a rainbow $P_4$-copy. 

We re-draw the situation in Figure~\ref{fig10}, setting $c(yw) = c$.

\begin{figure}[h]
\centering
\begin{tikzpicture}
\filldraw (0,0) circle(0.05 cm) node[left]{$x$};
\filldraw (0,-2) circle(0.05 cm) node[below]{$v$};
\filldraw (0,2) circle(0.05 cm) node[above]{$y$};
\filldraw (-2,0) circle(0.05 cm) node[left]{$w$};
\filldraw (2,0) circle(0.05 cm) node[right]{$z$};

\draw (0,0) -- (0,2) node[pos=0.5, right]{2};
\draw (0,0) -- (0,-2) node[pos=0.5, right]{1};
\draw (0,2) -- (2,0) node[pos=0.5, above right]{3};
\draw (2,0) -- (0,-2) node[pos=0.5, below right]{4};
\draw (0,-2) -- (-2,0) node[pos=0.5, below left]{5};
\draw (-2,0) -- (0,2) node[pos=0.5, above left]{$c$};

\end{tikzpicture}
\caption{}\label{fig10}
\end{figure}

Now, consider $w$. If $w$ is adjacent to a vertex not depicted in Figure~\ref{fig10}, say $s$, then one of $y x v w s$, $y z v w s$ is a rainbow $P_4$-copy ending at $y$. So $w$ is adjacent to no vertex which is not drawn in Figure~\ref{fig10}. If $wx$ is an edge, then either $v w x y z$ is a rainbow $P_4$-copy ending at $v$, or $y x w v z$ is a rainbow $P_4$-copy ending at $y$. Finally, if $wz$ is an edge, then either $y z w v x$ is a rainbow $P_4$-copy ending at $y$ or $v w z y x$ is a rainbow $P_4$-copy ending at $v$. We conclude that $d(w) = 2$, a contradiction. So, if $wy$ is an edge, then $d(y) \leq 5$. 

We have thus shown that $d(w) \leq 4$ and $v$ is the only vertex in $N(w)$ of degree greater than $5$. We are now ready to build $L(v)$.

Observe that $v$ has at least $d(v) - 5$ neighbors of the same type as $w$, namely, neighbors which do not lie on $C$ and are incident to $v$ by an edge whose color is not from $\{1,2,3,4\}$. Let $L(v)$ be the set of such vertices. The above argument then shows that the maximum degree in $L(v)$ is at most $4$, and that if $w$ is any vertex in $L(v)$, then $v$ is the only neighbor of $w$ with degree greater than $5$. 

We now observe that the average degree in $L(v) \cup \{v\}$ is at most $5$. Say $|L(v)| = d(v) - k$; we have noted that $k \leq 5$. The average degree in $L(v) \cup \{v\}$ is then
$$\frac{d(v) + \underset{u \in L(v)}{\sum} d(u)}{|L(v)| + 1} \leq \frac{d(v) + 4(d(v) - k)}{d(v) - k + 1} = \frac{5(d(v) - 4k/5)}{d(v) - (k - 1)}.$$
Observe that 
$$\frac{5(d(v) - 4k/5)}{d(v) - (k - 1)} \leq 5$$
as long as $d(v) - (k-1) \geq d(v) - 4k/5$, i.e., $k - 1 \leq 4k/5$, which holds precisely when $k \leq 5$. 

So, $\{v\} \cup L(v)$ is a local pairing as desired.

\textbf{Case 2:} $v$ does not lie in a rainbow $C_4$-copy.

Using Lemma \ref{P3 lemma}, we know that $v$ is the endpoint of a rainbow $P_3$-copy, say $P = v x y z$, with edge colors $1,2,3$. Moreover, since $d(v) \geq 6$, $v$ is incident to at least three edges which are not colored from $\{1,2,3\}$. Clearly, one of these does not have its other endpoint on $P$. So there exists a new vertex $w$ such that $vw$ is an edge which receives a new color, say $4$. 

Consider the possible neighbors of $w$. If $w$ is adjacent to a vertex not on $P$, say $u$, then $c(wu) \in \{1,2,3\}$ in order to obey coloring rules and ensure that $u w v x y z$ is not a rainbow $P_5$-copy. If $wx$ is an edge, then $v w x y z$ is a $P_4$-copy ending at $v$, so must not be rainbow, which means $c(wx) = 3$. If $wy$ is an edge, then $w y x v w$ is a $C_4$-copy containing $v$, so is not rainbow, meaning $c(wy) = 1$. If $wz$ is an edge, then $v w z y x$ is a $P_4$-copy ending at $v$, so cannot be rainbow, meaning $c(wz) = 2$. Thus, $w$ can only be incident to edges colored from $\{1,2,3,4\}$, so we conclude that $d(w) \leq 4$.

Our goal is to use $w$ as one of the low-degree vertices in $L(v)$, so we must verify that $w$ has no neighbor other than $v$ of degree greater than $6$. 

Firstly, we examine $z$. If $z$ is adjacent to a vertex not in $\{w,v,x,y\}$, say $u$, then $c(zu) \in \{1,2\}$, else $v x y z u$ is a rainbow $P_4$-copy ending at $v$. We have seen that if $zw$ is an edge, then $c(zw) = 2$. If $vz$ is an edge, then $c(vz) = 2$, or else $v x y z v$ is a rainbow $C_4$-copy containing $v$. So, there is at most one vertex, namely $x$, which is incident to $z$ via an edge which is not colored from $\{1,2,3\}$. Thus, $d(z) \leq 4$.

Next, consider $x$. Suppose $wx$ is an edge and (for a contradiction) that $d(x) \geq 6$. Since $x$ is adjacent to $v$, we know by Lemma \ref{neighbor-lemma} that $x \in \overline{V'}$. Thus, by Lemma \ref{length-4}, $x$ is not the endpoint of a rainbow $P_4$-copy. 

We have seen that $c(wx)$ must be $3$. Observe that, since $d(v) \geq 6$, $v$ must be incident to an edge of a new color, say $5$, whose other endpoint is a vertex not yet considered, say $u$. We draw this in Figure~\ref{fig13}.

\begin{figure}[h]
\centering
\begin{tikzpicture}
\filldraw (0,0) circle(0.05 cm) node[below]{$w$};
\filldraw (2,0) circle(0.05 cm) node[below]{$v$};
\filldraw (4,0) circle(0.05 cm) node[below]{$x$};
\filldraw (6,0) circle(0.05 cm) node[below]{$y$};
\filldraw (8,0) circle(0.05 cm) node[below]{$z$};
\draw (0,0) -- (8,0);
\draw (1, 0) node[above]{4};
\draw (3, 0) node[above]{1};
\draw (5, 0) node[above]{2};
\draw (7, 0) node[above]{3};

\draw (0,0) arc(180:0:2);
\draw (2,2) node[above]{3};

\filldraw (0,-2) circle(0.05 cm) node[below]{$u$};
\draw (2,0) -- (0,-2) node[pos = 0.5, below right]{5};

\end{tikzpicture}
\caption{}\label{fig13}
\end{figure}

We shall achieve a contradiction by showing that $d(u) \leq 2$. First, we argue that $u$ is not adjacent to any vertex from $\{w,x,y,z\}$. Observe that $uw$ is not an edge, since if so, either $v u w x y$ is a rainbow $P_4$-copy ending at $v$, or $v u w x v$ is a rainbow-$C_4$ copy containing $v$. Next, $ux$ is not an edge, since if so, any legal choice for $c(ux)$ makes $v u x y z$ a rainbow $P_4$-copy ending at $v$. Observe that $uy$ is not an edge, since if so, either $v u y x w v$ is a rainbow $C_5$-copy containing $v$, or $v u y x v$ is a rainbow $C_4$-copy containing $v$. Finally, $uz$ is not an edge, since if so, either $v u z y x$ is a rainbow $P_4$-copy ending at $v$ or $x v u z y$ is a rainbow $P_4$-copy ending at $x$. 

Thus, $u$ is not adjacent to $w,x,y,$ or $z$. Suppose now that $u$ has a neighbor $s$ which is not yet considered. Then $s u v w x$ is a $P_4$-copy ending in $x$, so cannot be rainbow, which means $c(us) \in \{3,4\}$. We also have that $s u v x y z$ is a $P_5$-copy, so cannot be rainbow, which means $c(us) \neq 4$. Thus, if $u$ is adjacent to a new vertex, the edge used must be colored $3$, meaning that $u$ has at most one neighbor not yet drawn. So $d(u) \leq 2$, a contradiction. We conclude that if $wx$ is an edge, then $d(x) \leq 5$.

Next, suppose $wy$ is an edge. We wish to show that $d(y) \leq 5$. Suppose for a contradiction that $d(y) \geq 6$. We have seen that $c(wy) = 1$. Note that by Corollary \ref{distance-2}, $y$ is in $\overline{V'}$, so is not the endpoint of a rainbow $P_4$-copy.

We now examine $z$. Note first that if $z$ has a neighbor not in $\{w, v, x\}$, say $u$, then $c(zu) = 1$, or else one of $v x y z u$, $v w y z u$ is a rainbow $P_4$-copy ending at $v$. Thus, to achieve degree $3$, $z$ must have at least one neighbor among $\{w, v, x\}$. We shall show that, to the contrary, $z$ cannot be adjacent to any of these vertices. 

Suppose $wz$ is an edge. We have seen that $c(wz)$ must be $2$. But now $y z w v x$ is a rainbow $P_4$-copy ending in $y$, a contradiction. So $wz$ is not an edge. Observe that $zv$ is not an edge, since for any legal choice of $c(zv)$,  $v z y w v$ is a rainbow $C_4$-copy containing $v$. Finally, suppose $xz$ is an edge. We must have $c(xz) = 4$, else $v w y z x$ is a rainbow $P_4$-copy ending at $v$. Now, since $d(v) = 6$, $v$ must be incident to an edge of a new color, say $5$, whose other endpoint is not in $\{w,x,y,z\}$. Say this edge is $vs$. Observe that $yzxvu$ is now a rainbow $P_4$-copy ending at $y$. We conclude that $z$ is adjacent to none of $v, y, w$, so $d(z) \leq 2$, a contradiction. Thus, if $wy$ is an edge, then $d(y) \leq 5$. 

Finally, we must show that if $w$ is adjacent to a vertex not in $\{v,x,y,z\}$, say $u$, then $d(u) \leq 5$. Suppose to the contrary that $d(u) \geq 6$. We can again apply Corollary \ref{distance-2} to give that $u$ is in $\overline{V'}$, so $u$ is not the endpoint of a rainbow $P_4$-copy. We must have $c(uw) \in \{1,2\}$, else $u w v x y$ is a rainbow $P_4$-copy ending at $u$; however, unlike the other cases, we cannot at the moment fix $c(uw)$. We shall set $c(uw) = a$. 

Now, since $d(u) \geq 6$, $u$ is incident to an edge of a new color, $5$. It is simple to check that the other endpoint of this edge must be a vertex not in $\{v,x,y,z\}$, say $s$. Similarly, $v$ must be incident to an edge of another new color, $6$, and the endpoint of this edge must be a vertex not in $\{s,u,w,x,y,z\}$, say $t$. We illustrate the situation in Figure \ref{fig14}

\begin{figure}[h]
\centering
\begin{tikzpicture}
\filldraw (0,0) circle(0.05 cm) node[below left]{$w$};
\filldraw (2,0) circle(0.05 cm) node[below left]{$v$};
\filldraw (4,0) circle(0.05 cm) node[below]{$x$};
\filldraw (6,0) circle(0.05 cm) node[below]{$y$};
\filldraw (8,0) circle(0.05 cm) node[below]{$z$};
\draw (0,0) -- (8,0);
\draw (1, 0) node[above]{4};
\draw (3, 0) node[above]{1};
\draw (5, 0) node[above]{2};
\draw (7, 0) node[above]{3};

\filldraw (0,-2) circle(0.05 cm) node[below left]{$u$};
\draw (0,0) -- (0,-2) node[pos = 0.5, left]{$a$};

\filldraw (0,-4) circle(0.05 cm) node[below]{$s$};

\draw (0,-2) -- (0,0-4) node[pos = 0.5, left]{$5$};

\filldraw (2,-2) circle(0.05 cm) node[below]{$t$};

\draw (2,0) -- (2,-2) node[pos=0.5, right]{6};

\end{tikzpicture}
\caption{}\label{fig14}
\end{figure}

We will argue that $d(t) \leq 2$. We claim  first that $t$ is not adjacent to $s, u, w,$ or $x$. Indeed, if $ts$ is an edge, then one of the paths $ustvw, ustvx$ is a rainbow $P_4$-copy ending at $u$. If $tu$ is an edge, then either $v$ is in a rainbow $C_4$-copy or  $utvxy$ is a rainbow $P_4$-copy ending at $u$. If $tw$ is an edge, then either $vtwus$ is a rainbow $P_4$-copy ending at $v$, or $wtvxyz$ is a rainbow $P_5$-copy. And if $tx$ is an edge, then either $vtxyz$ is a rainbow $P_4$-copy ending at $v$, or $uwvtx$ is a rainbow $P_4$-copy ending at $u$. Thus, the only possible neighbors of $t$ are $y,z$, and vertices not depicted in Figure~\ref{fig14}.

Observe, if $ty$ is an edge, then $c(ty) \in \{a, 4\}$, else $uwvty$ is a rainbow $P_4$-copy ending at $u$. But $c(ty) \neq 4$, since then $vtyxv$ is a rainbow $C_4$-copy containing $v$. Thus, if $ty$ is an edge, then $c(ty) = a$. Similarly, if $tz$ is an edge, then $c(tz) = a$. And if $t$ is adjacent to a vertex not yet drawn, then the incident edge also must be colored $a$ to avoid forming either a rainbow $P_5$-copy with $tvxyz$ or a rainbow $P_4$-copy ending at $u$. 

Thus, if $t$ is adjacent to any vertex other than $v$, the incident edge must be colored $a$, so $d(t) \leq 2$, a contradiction.

We have thus argued that $d(w) \leq 4$ and that $v$ is the only vertex in $N(w)$ of degree greater than $5$. We are now ready to build $L(v)$.

Note that $v$ has at least $d(v) - 4$ neighbors of the same type as $w$, that is, neighbors which are not on the path $vxyz$ and which are incident to $v$ by an edge whose color is not from $\{1,2,3\}$. (Note that if $vz$ is an edge, then $c(vz) = 2$, else $v$ is in a rainbow-$C_4$ copy, and thus we can find $d(v) - 4$ vertices of the same type as $w$, instead of the expected $d(v) - 5$.) Let $L(v)$ be the set of such vertices. As in the previous case, it follows that the average degree in $L(v) \cup \{v\}$ is at most $5$, (and actually will be strictly less than $5$, as $|L(v)| > d(v) - 5$). 
\end{proof}

\section{Concluding remarks}

While the details of the proofs are largely routine case analysis, it is worth reiterating the central idea which makes this approach to our problem tractable. The choice to partition $V$ as $V' \cup \overline{V'}$ is not arbitrary. Since rainbow $C_{\ell}$-copies arise naturally when we attempt to avoid rainbow $P_{\ell}$-copies, it is much easier to show that rainbow $P_{\ell}$-copies are unavoidable if we start with a vertex in $\overline{V'}$ than with a vertex whose situation we do not know. On the other hand, every vertex in $V'$ lies on a rainbow $C_5$-copy, meaning that we can begin any case analysis concerning vertices in $V'$ with a rainbow $C_5$-copy, rather than with a single vertex. This additional guaranteed structure likewise makes case analysis arguments tractable where they seem not to be if we pick a vertex from $V$ without assuming that it lies (or does not lie) in a rainbow $C_5$-copy. 

Given the increase of required analysis from previous results (the proof that $\ex^*(n, P_4) = 2n + O(1)$ is about two pages long), it seems clear that these naive methods will not be sufficient to obtain exact results for longer paths. However, the result for $P_5$ alone adds weight to the conjecture that $\ex^*(n,P_{\ell}) = \frac{\ell }{2}n + O(1)$ in general.

\section*{Acknowledgements}

The author would like to thank Cory Palmer for his advice and support in the preparation of this paper.


\begin{thebibliography}{99}

\bibitem{AS} N. Alon and C. Shikhelman, Many $T$ copies in $H$-free graphs, {\it J. Combin. Theory Ser. B}  \textbf{121} (2016), 146--172.

\bibitem{DLS13} S. Das, C. Lee and B. Sudakov, Rainbow Tur\'an problem for even cycles, {\it European J. Combin.}  \textbf{34} (2013), 905--915.

\bibitem{EGyM19} B. Ergemlidze, E. Gy\H{o}ri and A. Methuku, On the Rainbow Tur\'an number of paths, {\it Electron. J. Combin.} \textbf{26} (2019), P1.17.

\bibitem{GMMP} D. Gerbner, T. M\'esz\'aros, A. Methuku and C. Palmer, Generalized rainbow Tur\'an problems,
{\it Electron. J. Combin.} \textbf{29} (2022), P2.44.


\bibitem{HP} A. Halfpap and C. Palmer, Rainbow cycles versus rainbow paths, {\it Australas. J. Combin.} {\bf 81} (2021), 152--169.

\bibitem{OJ} O. Janzer, Rainbow Tur\'an number of even cycles, repeated patterns and blow-ups of cycles,
arXiv:2006.01062 (2020).


\bibitem{jps}  D. Johnston, C. Palmer and A. Sarkar, Rainbow Tur\'an Problems for Paths and Forests of Stars, {\it Electron. J. Combin.} \textbf{24} (2017), P1.34.
%
\bibitem{jr} D. Johnston and P. Rombach, Lower bounds for rainbow Tur\'{a}n numbers of paths and other trees,
{\it Australas. J. Combin.} \textbf{78} (2020), 61--72.
%
\bibitem{kmsv}   P. Keevash, D. Mubayi, B. Sudakov and J. Verstra{\"e}te, Rainbow Tur\'an problems, {\it Combin. Probab. Comput.} \textbf{16} (2007), 109--126.

\end{thebibliography}
\end{document}